\renewcommand{\P}{\operatorname{P}}
\newcommand{\Cov}{\operatorname{Cov}}
\newcommand{\conver}{\mathop{\longrightarrow}}
\newcommand{\GQ}{{\mathds Q}}
\newcommand{\GN}{{\mathds N}}
\newcommand{\GR}{{\mathds R}}
\newcommand{\GZ}{{\mathds Z}}
\newcommand{\bn}{{\mathbf n}}
\newcommand{\bm}{{\mathbf m}}
\newcommand{\bN}{{\mathbf N}}
\newcommand{\bP}{{\mathbf P}}
\newcommand{\bR}{{\mathbf R}}
\newcommand{\bp}{{\mathbf p}}
\newcommand{\br}{{\mathbf r}}
\newcommand{\bk}{{\mathbf k}}
\newcommand{\bj}{{\mathbf j}}
\newcommand{\bi}{{\mathbf i}}
\newcommand{\bx}{{\mathbf x}}
\newcommand{\bt}{{\mathbf t}}
\newcommand{\bq}{{\mathbf q}}
\newcommand{\cE}{{\mathcal E}}
\newcommand{\cD}{{\mathcal D}}
\theoremstyle{plain}
\newtheorem{theo}{Theorem}[section]
\newtheorem{prop}[theo]{Proposition}
\newtheorem{lem}[theo]{Lemma}
\newtheorem{corol}[theo]{Corollary}
\theoremstyle{remark}
\newtheorem{defin}[theo]{Definition}
\newtheorem{rem}[theo]{Remark}
\begin{document}
\title{Directional phantom distribution functions\\ for~stationary random fields}

\author{Adam Jakubowski${}^{1}$\footnote{E-mail: adjakubo@mat.umk.pl}, Igor Rodionov${}^{2}$\footnote{E-mail: vecsell@gmail.com} \\ and Natalia Soja-Kukie\l a${}^1$\footnote{E-mail: natas@mat.umk.pl} \\[3mm]
${}^{1}$ Nicolaus Copernicus University, Toru\'n, Poland\\
	${}^2$ Steklov Mathematical Institute\\ of Russian Academy of Sciences, Moscow, Russian Federation 
	}

\date{}

\maketitle

\begin{abstract}
	We give necessary and sufficient conditions for the existence of a phantom distribution function for a stationary random field on a regular lattice. We also introduce a less demanding notion of a directional phantom distribution, with potentially broader area of applicability. Such approach leads to sectorial limit properties, a phenomenon well-known in limit theorems for random fields. An example of a stationary Gaussian random field is provided showing that the two notions do not coincide. Criteria for the existence of the corresponding notions of the extremal index and the sectorial extremal index are also given.
\end{abstract}

\noindent {\em Keywords:}
stationary random fields; extreme value limit theory; phantom distribution function; extremal index; Gaussian random fields

\noindent{\em MSClassification 2010:} 60G70, 60G60, 60G15.

\section{Introduction and announcement of results}
\subsection{Phantom distribution functions for sequences}

The notion of a phantom distribution function was introduced by O'Brien \cite{OBr87}. Let  $\{X_n:n\in\GZ\}$ be a stationary sequence with a marginal distribution function $F$ and partial maxima $M_n:= \max\{X_k: 1\leq k \leq n \}$, $n\in\GN$. We say that a distribution function $G$ is a {\em phantom distribution function for $\{X_n\}$}, if
\begin{equation*}\label{def_pdf_seq}
\sup_{x\in \GR} \left| P( M_n \leq x) - G(x)^n\right| \xrightarrow[n\to\infty]{} 0.
\end{equation*}
This means that $G$ completely describes asymptotic properties (in law) of partial maxima $\{X_n\}$. $G$ is also involved in description of asymptotics of higher order statistic of $\{X_n\}$ (see \cite{J93} and \cite{S-K17}).

If $G$ can be chosen in the form $G(x) = F^{\theta}(x)$, i.e. if
for some $\theta \in (0,1]$
\begin{equation*}\label{def_ei_seq}
\sup_{x\in \GR} \left| P( M_n \leq x) - P(X_0\leq x)^{\theta n}\right| \xrightarrow[n\to\infty]{} 0,
\end{equation*}
then, following Leadbetter \cite{Lea83}, we call $\theta$ the extremal index of $\{X_n\}$. The extremal index is a popular tool in the stochastic extreme value limit theory (see e.g. \cite{LLR83}). There exist, however, important classes of stationary sequences which admit a {\em continuous} phantom distribution function, while the notion of the extremal index is irrelevant in the description of the asymptotics of their partial maxima. This holds, for example, when Lindley's process has
subexponential innovations  \cite{Asmu98} or when the continuous 
target distribution of the random walk Metropolis algorithm has heavy tails \cite{RRSS06}.

Existence of a phantom distribution function is a quite common property. Doukhan et al. \cite[Theorem 6]{DJL15} show, that any $\alpha$-mixing sequence with {\em continuous} marginals admits a continuous phantom distribution function.
General Theorem 2, {\em ibid.}, asserts that a stationary sequence $\{X_n\}$ admits a continuous phantom distribution function if, and only if,   
there exists a sequence $\{v_n\}$ and $\gamma \in (0,1)$  such that 
\begin{equation}\label{e3}
\P(M_n\leq v_n)\to \gamma,
\end{equation}
and for each $T > 0$ the following {\bf Condition $\mathbf{B}_{T}(\{v_n\})$}  is fulfilled: 
\begin{equation}\label{e4a}
\sup_{p,q\in \GN,\atop \ p+q \leq T\cdot n}\left| \P\big(M_{p+q}\leq v_n\big) - \P\big(M_p\leq v_n\big) \P\big(M_q\leq v_n\big)\right| 
\to 0.
\end{equation}
Notice that {\bf Condition $\mathbf{B}_{T}(\{v_n\})$} can be satisfied even by non-ergodic sequences (see Theorem 4, {\em ibid.}). {\bf Condition $\mathbf{B}_{T}(\{v_n\})$} was introduced in \cite{Jak91}.

Another interesting issue is that there are ``user-friendly'' criteria of existence of a phantom distribution function for arbitrary (non-stationary) sequences - see \cite{Jak93} and \cite[Theorem 3]{JaTr18}. Such results are particularly useful in investigating Markov chains ``starting at the point".

\subsection{Phantom distribution functions for random fields}

As the previous section shows, the theory of phantom distribution functions for random {\em sequences} is essentially closed. It is therefore surprising that the corresponding theory of phantom distributions for {\em random fields over $\GZ^d$} is still far from being complete.

Let $\GZ^d$ be the $d$-dimensional lattice built on integers with the standard (coordinatewise) partial order $\leq$. Let $\{X_\bn : \bn\in\GZ^d\}$ be a $d$-dimensional {\em stationary} random field with a marginal distribution function $F$ and partial maxima defined for $\bj,\bn\in\GZ^d$ by the formulae 
\[M_{\bj,\bn} := \max \{ X_\bk  : \bj \leq \bk  \leq \bn\}, \ \text{ if }\  \bj\leq \bn,\qquad  M_{\bj,\bn} := -\infty,\ \text{ if }\ \bj \not\leq \bn.\]
It is also convenient to define 
\[
M_\bn:= M_{\mathbf{1}, \bn},\ \ \bn \in \GZ^d.\]
Of course, $M_\bn$ is of interest only if $\bn \in \GN^d$ (here and in the sequel we distinguish between $\GN= \{1,2,\ldots\}$ and
$\GN_0 = \{0\} \cup \GN$).

It seems that the first paper that mentions the notion of a phantom distribution function in the context of random fields is \cite{JS}.
Following this paper we will say that $G$ is a phantom distribution function for $\{X_\bn\}$, if
\begin{equation}\label{def_strong_pdf}
\sup_{x\in\GR}\left|P\left(M_\bn \leq x\right) - G(x)^{{\bn}^*} \right| \rightarrow 0, \text{ as $\bn \to \pmb{\infty}$  (coordinatewise)},
\end{equation}
where $\bn^* = n_1 \cdot n_2 \cdot \ldots \cdot n_d$, if $\bn = (n_1,n_2, \ldots, n_d)$.

Theorem 4.3 {\em ibid.} states that $m$-dependent random fields as well as moving maxima, moving averages and Gaussian fields satisfying Berman's condition admit a phantom distribution function in the above, strong sense. Another family of interesting examples, exploring the idea of a {\em tail field} in the context of the extremal index can be found in \cite{WuSa18}.

Note that (\ref{def_strong_pdf})  describes the asymptotic behavior of $M_{\bn}$ {\em regardless} of the way in which $\bn$ grows to $\pmb{\infty} = (\infty,\infty,\ldots,\infty)$. To make this statement precise, let us define a {\em monotone curve  in $\GN^d$} as a map $\pmb{\psi} : \GN \to \GN^d$ such that $\pmb{\psi}(n) \to \pmb{\infty}$, for $n=1,2,\ldots$ $\pmb{\psi} (n) \leq \pmb{\psi}(n+1)$ and $\pmb{\psi} (n) \neq \pmb{\psi}(n+1)$ (hence $\{\pmb{\psi}(n)^*\}$ is strictly increasing) and, as $n\to\infty$,
\begin{equation}\label{eq:crucial}
\frac{\pmb{\psi}(n)^*}{\pmb{\psi}(n+1)^*} \to 1.
\end{equation}

We will say that $G$ is a phantom distribution function for $\{X_\bn\}$ {\em along } $\pmb{\psi}$, if 
\begin{equation}\label{eq:along}
 \sup_{x\in \GR} \left| P ( M_{\pmb{\psi}(n)} \leq x ) - G(x)^{ \pmb{\psi}(n)^*}\right| \to 0, \text{ as $n \to \infty$}.
 \end{equation}
Any function $G$ satisfying (\ref{eq:along}) will be denoted by $G_{\pmb{\psi}}$.
Within such terminology we have the following
\begin{prop}\label{prop:zero}
	A stationary random field $\{X_\bn\}$ admits a continuous phantom distribution function $G$ if, and only if,  $G$ is a phantom distribution function for $\{X_\bn\}$  along {\em every} monotone curve.
	\end{prop}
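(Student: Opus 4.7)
The plan is to verify the two implications separately. The forward direction is immediate from the definitions: if $G$ satisfies (\ref{def_strong_pdf}) and $\pmb{\psi}$ is any monotone curve, then $\pmb{\psi}(n)\to\pmb{\infty}$, so evaluating the convergence in (\ref{def_strong_pdf}) along the indices $\pmb{\psi}(n)$ yields (\ref{eq:along}) with the same $G$.

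For the converse I would argue by contradiction. Assume that $G$ is a phantom distribution function along every monotone curve but (\ref{def_strong_pdf}) fails; then there exist $\varepsilon>0$ and a sequence $\bn^{(k)}\to\pmb{\infty}$ with
\[\sup_{x\in\GR}\bigl|P(M_{\bn^{(k)}}\leq x)-G(x)^{(\bn^{(k)})^*}\bigr|\geq\varepsilon \quad\text{for every } k.\]
Because each coordinate $n^{(k)}_i$ tends to infinity with $k$, a routine extraction yields a subsequence, still denoted $(\bn^{(k)})$, satisfying $\bn^{(k)}_i<\bn^{(k+1)}_i$ for all $i$ and all $k$. The crux is to embed this subsequence into a monotone curve $\pmb{\psi}$. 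I set $\pmb{\psi}(1)=\bn^{(1)}$ and, to pass from $\bn^{(k)}$ to $\bn^{(k+1)}$, raise coordinates one at a time in unit steps (coordinate $1$ up to $n^{(k+1)}_1$, then coordinate $2$, and so on). By construction $\pmb{\psi}$ is coordinatewise non-decreasing, $\pmb{\psi}(n)\neq\pmb{\psi}(n+1)$, and $\pmb{\psi}(n_k)=\bn^{(k)}$ for a strictly increasing sequence $(n_k)$.

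To verify the ratio condition (\ref{eq:crucial}), observe that $\pmb{\psi}(n+1)$ differs from $\pmb{\psi}(n)$ by $+1$ in exactly one coordinate $i(n)$, so
\[\frac{\pmb{\psi}(n)^*}{\pmb{\psi}(n+1)^*}=\frac{\pmb{\psi}(n)_{i(n)}}{\pmb{\psi}(n)_{i(n)}+1}.\]
For $n\geq n_k$ every coordinate of $\pmb{\psi}(n)$ is at least $\min_j n^{(k)}_j$, which tends to infinity with $k$; hence $\pmb{\psi}(n)_{i(n)}\to\infty$ and the ratio tends to $1$. Thus $\pmb{\psi}$ is a legitimate monotone curve, yet the displayed inequality at the indices $n=n_k$ contradicts (\ref{eq:along}), completing the argument. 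I expect the only subtle point to be precisely this curve-construction step: one must verify that unit-coordinate interpolation preserves (\ref{eq:crucial}), which rests on the observation that in a coordinatewise strictly increasing $\GN^d$-sequence every coordinate necessarily diverges. The continuity of $G$ enters only through the hypothesis and plays no role in the argument itself.
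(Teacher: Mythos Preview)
Your proof is correct and follows essentially the same route as the paper's: both argue the converse by contradiction, extract a monotone sequence of lattice points where the uniform bound fails, and then embed that sequence into a monotone curve by interpolating between successive points via unit steps in one coordinate at a time. Your version is slightly more explicit---you spell out the strict-coordinate subsequence extraction and verify (\ref{eq:crucial}) directly from the unit-step construction---whereas the paper simply asserts that the interpolated map satisfies (\ref{eq:crucial}); but the underlying idea is identical.
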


Another consequence of (\ref{def_strong_pdf}) is that
if $x$ has the property that $G(x)^{{\bn}^*}$ is a ``good''
approximation of $P\left(M_{\bn} \leq x\right)$, then it is
equally good for all other points $\bm$ with $\bm^* = \bn^*$.
In other words, such $x$ is a function of the class
$L_k = \{\bn \in \GN^d\,;\,\bn^* = k\}$ rather, than of
$\bn$ alone. We formalize this observation by introducing the
notion of a {\em strongly monotone field of levels}. We will say
that $v_{(\cdot)} : \GN^d \to \GR^1$ is strongly monotone, if
 $v_{\bm} \leq v_{\bn}$ whenever ${\bm}^* \leq \bn^*$. 
This implies, in particular, that $v_{\bm} = v_{\bn}$, 
if ${\bm}^* ={\bn}^*$.

We are now able to give a multidimensional analog of \cite[Theorem 2]{DJL15}.  

\begin{theo}\label{theo1}
	Let $\{X_\bn : \bn\in\GZ^d\}$ be a stationary random field.
	Then $\{X_\bn\}$ admits a continuous phantom distribution function if, and only if, the following two conditions are satisfied.
	\begin{description}
		\item{\rm (i)} There exist $\gamma \in (0,1)$ and a {\em strongly monotone} field of levels $\{v_{\bn}\,;\, \bn \in \GN^d\}$  such that
		\[ P( M_{\bn} \leq v_{\bn}) \to \gamma, \text { as $\bn \to \pmb{\infty}$}.\]
		\item{\rm (ii)} For every monotone curve  $\pmb{\psi}$ and every $T > 0$ the following {\bf Condition $\mathbf{B}_T^{\pmb{\psi}}(\{v_{\pmb{\psi}(n)}\})$} holds.
	\end{description}	
	\begin{align*}
	\beta^{\pmb{\psi}}_T(n):=
	\max_{\mathbf{p}(1)+\mathbf{p}(2)\leq T \pmb{\psi}(n)}\Big|  P&\left(M_{\mathbf{p}(1)+\mathbf{p}(2)} \! \leq \! v_{\pmb{\psi}(n)}\right)
	 \\
	& - \prod_{\bi \in\{1,2\}^d} \!\!\!   P\left(M_{(p_1(i_1),p_2(i_2),\ldots,p_d(i_d))} \! \leq \! v_{\pmb{\psi}(n)}\right)\Big| \xrightarrow[n\to\infty]{} 0.
	\end{align*}
	(The quantities $\mathbf{p}(1)$ and $\mathbf{p}(2)$ under maximum take values in $\GN_0^d$).
\end{theo}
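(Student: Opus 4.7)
\emph{Plan.} The plan is to leverage Proposition~\ref{prop:zero}, which reduces the existence of a continuous phantom distribution function to the existence of a common phantom along every monotone curve. Both directions will be treated in this curve-by-curve framework, with the one-dimensional criterion of Doukhan--Jakubowski--Lang \cite[Theorem~2]{DJL15} serving as a model.

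\emph{Necessity.} Suppose $\{X_\bn\}$ admits a continuous phantom distribution function $G$. I will fix $\gamma \in (0,1)$ and define $v_\bn := \inf\{x \in \GR : G(x) \geq \gamma^{1/\bn^*}\}$; by continuity of $G$, $v_\bn$ depends only on $\bn^*$, so $\{v_\bn\}$ is strongly monotone with $G(v_\bn)^{\bn^*} = \gamma$, and evaluating (\ref{def_strong_pdf}) at $x = v_\bn$ will yield $P(M_\bn \leq v_\bn) \to \gamma$, delivering (i). For (ii), fix a monotone curve $\pmb{\psi}$ and $T > 0$, and for $\mathbf{p}(1) + \mathbf{p}(2) \leq T\pmb{\psi}(n)$ set $\mathbf{p}_\bi := (p_1(i_1), \ldots, p_d(i_d))$ for $\bi \in \{1,2\}^d$. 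The key algebraic identity
\[
(\mathbf{p}(1)+\mathbf{p}(2))^* \;=\; \prod_{j=1}^d (p_j(1)+p_j(2)) \;=\; \sum_{\bi \in \{1,2\}^d} \mathbf{p}_\bi^*
\]
gives $G(v_{\pmb{\psi}(n)})^{(\mathbf{p}(1)+\mathbf{p}(2))^*} = \prod_\bi G(v_{\pmb{\psi}(n)})^{\mathbf{p}_\bi^*}$. Applying (\ref{def_strong_pdf}) at $x = v_{\pmb{\psi}(n)}$ both to the full rectangle and to each $M_{\mathbf{p}_\bi}$, and using $|\prod a_\bi - \prod b_\bi| \leq \sum |a_\bi - b_\bi|$ for $a_\bi, b_\bi \in [0,1]$, will deliver (ii); I expect the degenerate factors (zero coordinate, or $\mathbf{p}_\bi^*$ bounded) to contribute negligibly because $v_{\pmb{\psi}(n)}$ is driven to the right endpoint of $G$.

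\emph{Sufficiency.} Conversely, given (i) and (ii), I would define $G$ on the range $\{v_\bn\}$ by $G(v_\bn) := \gamma^{1/\bn^*}$ (well-defined by strong monotonicity) and extend to a non-decreasing continuous function on $\GR$. By Proposition~\ref{prop:zero} it is enough to show $G$ is a phantom along every monotone curve $\pmb{\psi}$. Along such a $\pmb{\psi}$, I would partition $[\mathbf{1}, \pmb{\psi}(n)]$ into $K^d$ nearly-congruent sub-rectangles and iterate $\mathbf{B}_T^{\pmb{\psi}}$ to factorize $P(M_{\pmb{\psi}(n)} \leq v_{\pmb{\psi}(n)})$ as a product with compounded error of order $K^d \cdot \beta^{\pmb{\psi}}_T(n)$. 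Choosing $K = K(n) \to \infty$ slowly and combining with (\ref{eq:crucial}) and (i) should force convergence of the product towards $\gamma$, which via the uniform-in-$x$ framework of \cite[Theorem~2]{DJL15} bootstraps to the phantom property along $\pmb{\psi}$ for arbitrary levels. The hard part will be that one application of $\mathbf{B}_T^{\pmb{\psi}}$ produces $2^d$ sub-rectangles instead of two, so repeated splittings require delicate bookkeeping of compounded errors against the number of blocks and the rate in (\ref{eq:crucial}), and this must hold uniformly over \emph{all} monotone curves—including those with wildly varying aspect ratios. That uniformity is the central technical challenge I anticipate.
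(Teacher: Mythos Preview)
Your necessity argument is essentially the paper's: define $v_\bn$ via $G(v_\bn)^{\bn^*}=\gamma$, use the expansion $(\bp(1)+\bp(2))^*=\sum_{\bi}\bp_\bi^*$, and replace probabilities by $G$-powers. The paper argues by passing to subsequences with $p_i(n)/\psi_i(n)\to s_i$, $q_i(n)/\psi_i(n)\to t_i$ rather than invoking (\ref{def_strong_pdf}) directly, precisely because $\bp_\bi$ need not go to $\pmb{\infty}$ coordinatewise; your acknowledged ``degenerate factors'' case is exactly this, and the paper handles it by squeezing against $P(M_{(\lfloor\varepsilon\psi_1(n)\rfloor,\lfloor T\psi_2(n)\rfloor,\ldots)}\le v_{\pmb{\psi}(n)})\to\gamma^{\varepsilon T^{d-1}}$.

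For sufficiency your skeleton is right and matches the paper: iterating $\mathbf{B}_T^{\pmb{\psi}}$ gives a compounded error $\beta_T^{\pmb{\psi}}(n,k)\le k^d\beta_T^{\pmb{\psi}}(n)$ (this is the paper's Lemma~\ref{L:ineq}), and letting the number of blocks grow slowly yields the factorization (Theorem~\ref{remark_BT}). But you have misidentified the central challenge. No uniformity over \emph{curves} is needed: the paper fixes a single curve $\pmb{\psi}$, proves the key uniform-in-$\mathbf{t}$ convergence $P(M_{\lfloor\mathbf{t}\cdot\pmb{\psi}(n)\rfloor}\le v_{\pmb{\psi}(n)})\to\gamma^{t_1\cdots t_d}$ over sets $A$ that avoid simultaneous $t_{i_1}\to\infty$, $t_{i_2}\to 0$ (Proposition~\ref{prop:unif}), and uses this to verify that $G_{\pmb{\psi}}$ defined by (\ref{G_form}) is a phantom along $\pmb{\psi}$. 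This replaces your vague appeal to \cite[Theorem~2]{DJL15}; the one-dimensional bootstrap does not transfer directly because an arbitrary level $x_n$ lands between $v_{\pmb{\psi}(m_n)}$ and $v_{\pmb{\psi}(m_n+1)}$ with ratios $\psi_i(n)/\psi_i(m_n)$ that move independently in each coordinate. The place where strong monotonicity (as opposed to mere monotonicity along each curve) is actually used is \emph{afterwards}: to show that the phantoms $G_{\pmb{\psi}'}$ and $G_{\pmb{\psi}''}$ built along two different curves are strictly tail-equivalent (Lemma~\ref{lem:key1} plus the sandwich $v_{\pmb{\psi}''(m_n)}\le v_{\pmb{\psi}'(n)}\le v_{\pmb{\psi}''(m_n+1)}$, which needs $v_\bn$ to depend only on $\bn^*$). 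Your single-$G$ definition hides this step but does not avoid it; you still need exactly this comparison to show your $G$ works along every curve.
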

Condition $\mathbf{B}_T^{\pmb{\psi}}(\{v_{\pmb{\psi}(n)}\})$ looks complicated but it is based on a~simple idea. We shall illustrate it in the two-dimensional case. Notice that for $d=2$ we have
\begin{multline*}
\beta^{\pmb{\psi}}_T(n)= \max_{\mathbf{p}+\mathbf{q}\leq T \pmb{\psi}(n)}\Big| P\left(M_{\mathbf{p}+\mathbf{q}}\!\leq v_{\pmb{\psi}(n)}\right) -  \\
\quad P\left(M_{\mathbf{p}}\leq v_{\pmb{\psi}(n)}\right) P\left(M_{(p_1, q_2)}\leq v_{\pmb{\psi}(n)}\right) P\left(M_{(q_1,p_2)}\!\leq v_{\pmb{\psi}(n)}\right) 
P\!\left(M_{\mathbf{q}}\leq v_{\pmb{\psi}(n)}\right)\Big| 
\end{multline*}
and, moreover, by the stationarity,
\begin{eqnarray*}
	P\left(M_{(p_1, q_2)}\leq v_{\pmb{\psi}(n)}\right)&=& P\left(M_{(1,p_2+1),(p_1,p_2+q_2)}\leq v_{\pmb{\psi}(n)}\right),\\
	P\left(M_{(q_1,p_2)}\leq{v_{\pmb{\psi}(n)}}\right)&=& P\left(M_{(p_1+1,1),(p_1+q_1,p_2)}\leq{v_{\pmb{\psi}(n)}}\right),\\
	P\left(M_{\mathbf{q}}\leq v_{\pmb{\psi}(n)}\right)&=& P\left(M_{\mathbf{p}+\mathbf{1},\mathbf{p}+\mathbf{q}}\leq v_{\pmb{\psi}(n)}\right).
\end{eqnarray*}
It follows that if $\beta^{\pmb{\psi}}_T(n)\to 0$, as $n\to\infty$,  then $ P\left(M_{\mathbf{p}+\mathbf{q}}\leq v_{\pmb{\psi}(n)}\right)$  can be approximated by the product of the four probabilities for maxima over disjoint blocks, as in  Figure~\ref{fig_blocks}.

\begin{figure}[h] 
	\centering
	\setlength{\fboxsep}{10pt}
	\setlength{\fboxrule}{0.1pt}
	\fbox{\includegraphics[width=8cm]{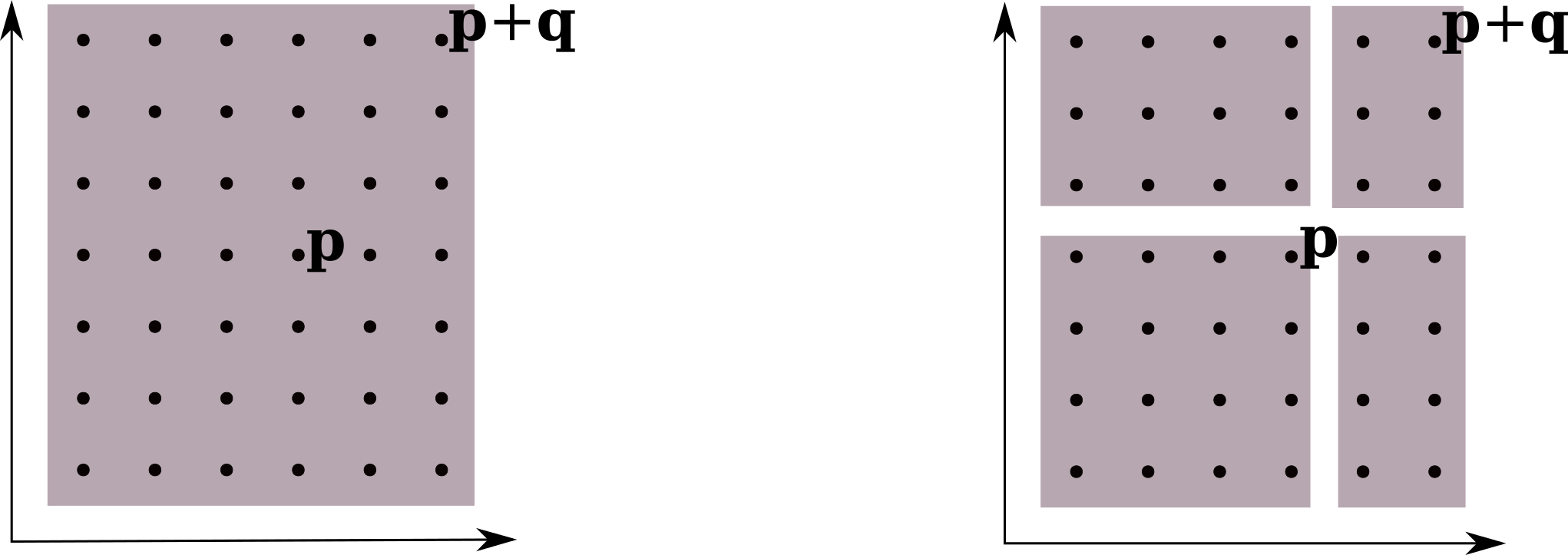}}
	\caption{Breaking probabilities into blocks as a consequence of Condition $\mathbf{B^{\pmb{\psi} }_T}(\{v_{\pmb{\psi}(n)}\})$, for $d=2$.} \label{fig_blocks}
\end{figure} 

By convention, if some coordinate of $\mathbf{p}$ or $\mathbf{q}$ is $0$, then $P\left(M_{\mathbf{p}+\mathbf{q}} \leq v_{\pmb{\psi}(n)}\right)$ breaks into smaller number of blocks (for $d =2$ into 2 or 1 block).

\begin{rem} By \cite[Theorem 4.3]{JS} models exhibiting local
	 dependence (like $m$-dependent or max-$m$-approximable
	  random fields) admit a continuous phantom distribution
	   function and so, by our Theorem  \ref{theo1}, 
	   satisfy Condition $\mathbf{B}_T^{\pmb{\psi}}(\{v_{\pmb{\psi}(n)}\})$.
\end{rem}

\begin{rem} Readers familiar with mixing conditions may not like the shape of Condition $\mathbf{B}_T^{\pmb{\psi}}(\{v_{\pmb{\psi}(n)}\})$ for there is no ``separation of blocks''. For example Leadbetter and Rootz\'en \cite{L-R98} investigate the asymptotics of maxima of stationary fields under {\em Coordinatewise mixing}, which involves separation of blocks. 	Ling \cite{Ling19} operates with Condition A1 (also involving separation of blocks), which is an adaptation of the well-known Condition D for sequences.
	Apart from the more complicated form of these conditions (that would be overhelming in $d$-dimensional considerations), they are essentially not easier in verification. We find the form of Condition $\mathbf{B}_T^{\pmb{\psi}}(\{v_{\pmb{\psi}(n)}\})$ very useful in theoretical consideration, for it reflects the intuition of breaking probabilities into product of probabilities over blocks and avoids technicalities.
	
	As a good example of how to check Condition $\mathbf{B}_T^{\pmb{\psi}}(\{v_{\pmb{\psi}(n)}\})$ (in one dimension) may serve Theorems 6-9 in \cite{DJL15}. 	
\end{rem}

\begin{rem}\label{R-rec}
	Suppose that $F$ is continuous. Choose $\gamma \in (0,1)$ and define the following field of levels:
	\[ v_{\bn} = \inf \{ x\,:\, P\left( M_{\bn} \leq x\right)  = \gamma\}.\]
	Then $\{v_{\bn}\}$ is non-decreasing, we have $P( M_{\bn} \leq v_{\bn}) \to \gamma$, but there is no reason to expect that it is strongly monotone. 
\end{rem}

\subsection{Directional and sectorial phantom distribution functions}

Remark \ref{R-rec} signalizes a serious difficulty
and suggests that the theory of phantom distribution functions
(and of the extremal index) in the sense of the strong
definition (\ref{def_strong_pdf}) is restricted to random fields with really
short-range dependencies (numerous examples of which are mentioned in the previous section).

It may happen that in some models another, weaker notion is 
more suitable. This is not an exceptional situation in the 
theory of random fields. For example, Gut \cite{GUT} gives
strong laws for i.i.d. sequences indexed by a {\em sector} 
and Gadidov \cite{GAD} deals with a similar framework for
$U$-statistics. Motivated by these examples we propose 
a new notion of a {\em directional } phantom distribution function.

Let $\{\pmb{\psi}(n)\}$ be a monotone curve. We define the class $\mathcal{U}_{\pmb{\psi}}$ of monotone curves, being a~kind of a~``neighbourhood'' of $\pmb{\psi}$, as follows. A monotone curve $\pmb{\varphi}$ belongs to $\mathcal{U}_{\pmb{\psi}}$ if and only if for some constant $C\geq 1$ and for almost all $n\in\GN$
\begin{equation*}\label{def_U_C}
\pmb{\varphi}(n)\in U(\pmb{\psi},C):=\bigcup_{j\in\GN} \prod_{i=1}^d [C^{-1} \psi_i(j), C \psi_i(j)].
\end{equation*}
An example of $U(\pmb{\psi},C)$ is shown in Figure \ref{fig_U}. 

\begin{figure}[h] 
	\centering
	\setlength{\fboxsep}{0pt}
	\setlength{\fboxrule}{0.1pt}
	\fbox{\includegraphics[width=5.5cm]{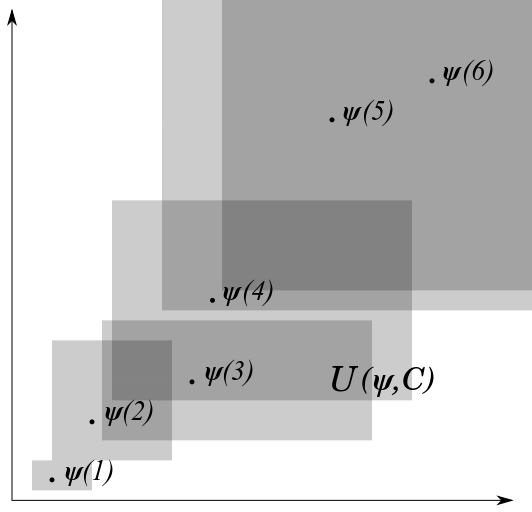}}
	\caption{The shaded area is the set $U(\pmb{\psi},C)\subset\GR ^2$ for $C=2$.}\label{fig_U}
\end{figure}

\begin{defin}
	Let $\{\pmb{\psi}(n)\}$ be a monotone curve. We will say that a distribution function $G$ is the $\pmb{\psi}$-directional phantom distribution function for $\{X_{\bn}\}$, if G is a phantom distribution function for $\{X_n\}$ along every monotone curve belonging to the set $\mathcal{U}_{\pmb{\psi}}$. We shall denote the $\pmb{\psi}$-directional phantom distribution function by $G_{\pmb{\psi}}$.
\end{defin}

Note that we already used the notation $G_{\pmb{\psi}}$ to denote the phantom distribution function {\em along} $\pmb{\psi}$. But there is no ambiguity. As we shall see in Theorem \ref{th:direct} below any  phantom distribution function {\em along} $\pmb{\psi}$ is automatically the $\pmb{\psi}$-directional phantom distribution function for $\{X_{\bn}\}$ and conversely.

\begin{rem} Let $\pmb{\Delta}(n)=(n,n,\ldots,n)$, $n\in\GN$, denote the diagonal map. 
	Observe that $\pmb{\varphi}$ belongs to $\mathcal{U}_{\pmb{\Delta}}$ if, and only if, $\varphi_1(n), \varphi_2(n),\ldots, \varphi_d(n)$ are of the same order, i.e., $1/C\leq \varphi_i(n)/\varphi_j(n)<C$ for some $C\geq 1$, all $i,j\in\{1,2,\ldots,d\}$ and almost all $n\in\GN$.
	It is natural to call  $G_{\pmb{\Delta}}$ a {\em sectorial} phantom distribution function.
\end{rem}

\begin{theo}\label{th:direct}
	Let $\{X_\bn : \bn\in\GZ^d\}$ be a stationary random field and let $\pmb{\psi}$ be a monotone curve.
	
	The following statements {\rm (i)-(iii)} are equivalent.
		\begin{description}
		\item{\rm (i)} $\{X_\bn\}$ admits a continuous phantom distribution function along $\pmb{\psi}$.
		\item{\rm (ii)} $\{X_\bn\}$ admits a continuous $\pmb{\psi}$-directional phantom distribution function. 
		\item{\rm (iii)} There exist $\gamma \in (0,1)$ and a {\em non-decreasing} sequence of levels $\{v_{\pmb{\psi}(n)}\}$, $n \in \GN$, such that
		\begin{equation}\label{eq:mainbis} 
		P( M_{\pmb{\psi}(n)} \leq v_{\pmb{\psi}(n)}) \to \gamma, \text { as $n \to \infty$},
		\end{equation}
		and for every $T > 0$ Condition $\mathbf{B}_T^{\pmb{\psi}}(\{v_{\pmb{\psi}(n)}\})$ holds.
	\end{description}	
\end{theo}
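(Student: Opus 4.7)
The plan is to establish the cycle (ii) $\Rightarrow$ (i) $\Rightarrow$ (iii) $\Rightarrow$ (ii). The first implication is immediate from the definitions: the curve $\pmb{\psi}$ itself belongs to $\mathcal{U}_{\pmb{\psi}}$ (take $C=1$), so a continuous $\pmb{\psi}$-directional phantom is in particular a continuous phantom along $\pmb{\psi}$.

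For the core implication (iii) $\Rightarrow$ (ii), I would iterate Condition $\mathbf{B}_T^{\pmb{\psi}}$ in the spirit of the proof of Theorem 2 in \cite{DJL15} (adapted to $d$ dimensions) to establish the scaling law
\[
P(M_\bm \leq v_{\pmb{\psi}(n)}) \to \gamma^{\bm^*/\pmb{\psi}(n)^*}
\]
uniformly over $\bm$ with $\bm \leq T \pmb{\psi}(n)$. Concretely: subdivide (after integer rounding) the block $[\mathbf{1}, \pmb{\psi}(n)]$ into $k^d$ sub-blocks of side $\pmb{\psi}(n)/k$; iterated application of Condition $\mathbf{B}_T^{\pmb{\psi}}$ together with stationarity (which renders all sub-blocks equidistributed) gives $P(M_{\pmb{\psi}(n)/k} \leq v_{\pmb{\psi}(n)})^{k^d} \approx \gamma$, and a sandwich argument based on monotonicity of $\bn \mapsto M_\bn$ extends the exponent from the rational values $1/k^d$ to the full continuum of ratios $\bm^*/\pmb{\psi}(n)^*$. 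With the scaling law in hand, I would define $G$ implicitly by $G(v_{\pmb{\psi}(n)})^{\pmb{\psi}(n)^*} = \gamma$ on the dense family $\{v_{\pmb{\psi}(n)}\}$ and extend continuously to $\GR$. For arbitrary $\pmb{\varphi} \in \mathcal{U}_{\pmb{\psi}}$ with $\pmb{\varphi}(n) \in [C^{-1} \pmb{\psi}(j_n), C \pmb{\psi}(j_n)]$, applying the scaling law with $T = C$ will give
\[
P(M_{\pmb{\varphi}(n)} \leq v_{\pmb{\psi}(j_n)}) \to \gamma^{\pmb{\varphi}(n)^*/\pmb{\psi}(j_n)^*} = G(v_{\pmb{\psi}(j_n)})^{\pmb{\varphi}(n)^*},
\]
and uniform convergence in $x$ will follow from continuity of $G$ together with a standard compactness/weak-convergence argument.

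For (i) $\Rightarrow$ (iii), given a continuous $G_{\pmb{\psi}}$ and a fixed $\gamma \in (0,1)$, I would let $v_{\pmb{\psi}(n)}$ be the smallest $x$ satisfying $G(x)^{\pmb{\psi}(n)^*} = \gamma$. Continuity of $G$ and strict monotonicity of $\{\pmb{\psi}(n)^*\}$ produce a non-decreasing sequence with $P(M_{\pmb{\psi}(n)} \leq v_{\pmb{\psi}(n)}) \to \gamma$. For Condition $\mathbf{B}_T^{\pmb{\psi}}$, the algebraic identity $(\mathbf{p}(1)+\mathbf{p}(2))^* = \sum_{\bi \in \{1,2\}^d} p_1(i_1) \cdots p_d(i_d)$ makes both sides of the difference in $\mathbf{B}_T^{\pmb{\psi}}$ collapse to $G(v_{\pmb{\psi}(n)})^{(\mathbf{p}(1)+\mathbf{p}(2))^*}$, provided one has the uniform off-curve approximation $P(M_\bm \leq v_{\pmb{\psi}(n)}) \approx G(v_{\pmb{\psi}(n)})^{\bm^*}$ for $\bm \leq T\pmb{\psi}(n)$. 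This off-curve extension is the main obstacle, since the hypothesis only gives phantom at points on $\pmb{\psi}$. My plan is to embed each relevant $\bm$ into an auxiliary monotone curve $\pmb{\psi}_\bm \in \mathcal{U}_{\pmb{\psi}}$ that coincides with $\pmb{\psi}$ outside a finite window and passes through $\bm$, then bootstrap phantom along $\pmb{\psi}$ to phantom along $\pmb{\psi}_\bm$ by a squeeze argument relying on continuity of $G$, monotonicity of $\bn \mapsto M_\bn$, and the density condition $\pmb{\psi}(n+1)^*/\pmb{\psi}(n)^* \to 1$, analogous to the argument underlying Proposition \ref{prop:zero}.
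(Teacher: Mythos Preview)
Your cycle of implications and the substance of (iii)$\Rightarrow$(ii) match the paper closely. The scaling law you sketch is exactly Proposition~\ref{prop:unif}, and the paper defines $G_{\pmb{\psi}}$ precisely by $G_{\pmb{\psi}}(v_{\pmb{\psi}(n)})^{\pmb{\psi}(n)^*}=\gamma$ (formula~(\ref{G_form})). One refinement: to pass from the single level $v_{\pmb{\psi}(j_n)}$ to uniform convergence in $x$, the paper does not invoke an abstract compactness argument but, given $x_n\nearrow F_*$, sandwiches $x_n$ between $v_{\pmb{\psi}(m_n)}$ and $v_{\pmb{\psi}(m_n+1)}$ and then checks, using the very definition of $\mathcal{U}_{\pmb{\psi}}$, that the ratios $\varphi_i(n)/\psi_i(m_n)$ lie in a set admissible for Proposition~\ref{prop:unif}(ii) (either all $\le C$ or all $\ge C^{-1}$). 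Your sketch should be completed along these lines.

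There is, however, a genuine gap in your plan for (i)$\Rightarrow$(iii). You correctly isolate the obstacle: hypothesis (i) only controls $P(M_{\pmb{\psi}(k)}\le x)$, whereas $\mathbf{B}_T^{\pmb{\psi}}$ involves $P(M_\bm\le v_{\pmb{\psi}(n)})$ at off-curve points $\bm$. But your auxiliary-curve device cannot supply this. If $\pmb{\psi}_\bm$ coincides with $\pmb{\psi}$ outside a finite window, then ``phantom along $\pmb{\psi}_\bm$'' is \emph{literally the same} asymptotic statement as ``phantom along $\pmb{\psi}$'' (only the tail of the curve enters the limit), and it gives no information whatsoever about $P(M_\bm\le x)$ at the fixed finite index $k_0$ where $\pmb{\psi}_\bm(k_0)=\bm$. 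Trying instead to embed the whole sequence $\{\br(n)\}_n$ into a single monotone curve $\pmb{\chi}$ fails for two reasons: such $\pmb{\chi}$ need not lie in $\mathcal{U}_{\pmb{\psi}}$ (take $\pmb{\psi}=\pmb{\Delta}$ and $\br(n)=(1,Tn)$), and even when it does, transferring phantom from $\pmb{\psi}$ to $\pmb{\chi}$ is exactly statement (ii), which you are in the process of deriving. The paper handles (i)$\Rightarrow$(iii) by saying it follows Section~\ref{Sec221} ``with obvious modifications''; replaying that argument still requires the off-curve approximation $P(M_{\br(n)}\le v_{\pmb{\psi}(n)})\approx G(v_{\pmb{\psi}(n)})^{\br(n)^*}$, so whatever those modifications are, the construction you propose is not one of them.
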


\begin{rem} We have not yet addressed the question that is basic 
	for this section: is there any model for which there exists a sectorial phantom distribution function while there is no global phantom distribution function? The answer is yes, and the example is given in the next section.
\end{rem}

\subsection{Example}\label{sec:example}
\subsubsection{The random field}
First we shall construct two characteristic functions $\eta_1(\theta)$ and $\eta_2(\theta)$ on $\GR^1$ using Polya's recipe (see \cite{Fell70}). The graph of $\eta_1$ over $\GR^+$ is a polygon connecting points: 
\[\big(0,1\big), \big(1, \gamma_1\big(27 \frac{\ln \big(\ln 27\big)}{\ln 27} - 26 \frac{\ln \big(\ln 28\big)}{\ln 28}\big)\big), \big(28, \gamma_1\frac{\ln \big(\ln 28\big)}{\ln 28}\big), \big(29, \gamma_1\frac{\ln \big(\ln 29\big)}{\ln 29}\big), 
\ldots,\]
while the graph of $\eta_2$ over $\GR^+$ is defined using a different sequence of points:
\[\big(0,1\big), \big(1, \gamma_2 \big( \frac{2}{\ln 2} - \frac{1}{\ln 3}\big)\big),  \big(3,\gamma_2 \frac{1}{\ln 3}\big), 
\big(4, \gamma_2 \frac{1}{\ln 4}\big),\ldots.\]
The graphs of $\eta_1$ and $\eta_2$ over $\GR^-$ are obtained by reflection.
The positive numbers $\gamma_1$ and $\gamma_2$ satisfy 
\begin{equation}
 \gamma_1 > 1/4,\quad \gamma_1\big(27 \frac{\ln \big(\ln 27\big)}{\ln 27} - 26 \frac{\ln \big(\ln 28\big)}{\ln 28}\big) <  \gamma_2 \big( \frac{2}{\ln 2} - \frac{1}{\ln 3}\big) < \frac{1 - 2 \gamma_1}{1 + 2\gamma_1}.
  \label{gammas}
 \end{equation}
The reader may verify that such numbers $\gamma_1,\gamma_2$ do exist and that the corresponding functions $\eta_1$ and $\eta_2$ satisfy Polya's criterion. Therefore both $\{\eta_1(i)\}_{i\in\GZ}$ and $\{\eta_2(j)\}_{j\in\GZ}$ are positively defined. It follows that 
\begin{equation}
\label{ex:e1}
 r_{ij} = \eta_1(i)\eta_2(j)
 \end{equation}
is a covariance function on $\GZ^2$. This function satisfies 
\begin{equation}\label{ex:e2}
\delta := \sup_{(i,j)\in \mathbb{Z}^2\backslash \{(0,0)\}} r_{ij} < \frac{1 - 2\gamma_1}{1 + 2\gamma_1} < \frac{1}{3},
\end{equation}
and for $i$ and $j$ with sufficiently large absolute values  we have  
\begin{equation}
r_{ij} = \gamma_1 \gamma_2 \frac{\ln\ln |i|}{\ln |i|} \frac{1}{\ln |j|}.\label{r}
\end{equation}

Let $\mathbf{X} = \{X_{(i,j)}, (i,j)\in\mathbb{Z}^2\}$ be a Gaussian stationary random field  with mean zero, unit variance and covariance function $E X_{(i,j)} X_{(0,0)} = r_{ij}.$

\subsubsection{$\Phi$ is a sectorial phantom distribution function}
We shall prove that 
\begin{equation}
\sup_{x\in \mathbb{R}}\left|P(M_{\mathbf{n}} \leq x) - \Phi(x)^{n^2}\right| \to 0, \text{ as } n\to\infty,\label{toprove1}
\end{equation}
where $\mathbf{n} = (n,n) = \pmb{\Delta}(n)$, $M_{\mathbf{n}} = \max_{(i,j) \in [1,n]\times[1,n]} X_{(i,j)}$ and $\Phi(x)$ is the distribution function of a standard normal random variable. Applying  Theorem \ref{th:direct} we will conclude that $\Phi$ is a $\pmb{\Delta}$-directional (or sectorial) phantom distribution function for  $\mathbf{X}$.

As usually, in order to prove (\ref{toprove1}) it is sufficient to show that for every $c > 0$
 \begin{equation}\label{eq:equiv}
  P\big(M_{\mathbf{n}} \leq u_{\mathbf{n}}(c)\big) = \Phi\big(u_{\mathbf{n}}(c)\big)^{n^2} + o(1),
  \end{equation}
 where levels $\{u_{\mathbf{n}}(c)\}$ are such that $n^2(1 - \Phi(u_{\mathbf{n}}(c))) \to c$. Note that  for  $n$ large enough
 \begin{equation}\label{u_n1}\exp\left(- \frac{(u_{\mathbf{n}}(c))^2}{2}\right) = \frac{\sqrt{2\pi}c u_{\mathbf{n}}(c)}{n^2} (1+o(1)) \leq \frac{2 \sqrt{\pi} c u_{\mathbf{n}}(c)}{n^2} = K(c) \frac{u_{\mathbf{n}}(c)}{n^2}\end{equation}
 and that 
 \begin{equation}\label{u_n1_bis}
u_{\mathbf{n}}(c) \sim \sqrt{4 \ln n},\quad \text{ as $n\to\infty$.}
 \end{equation}
We have by Berman's inequality for Gaussian stationary sequences (\cite[Corollary 4.2.4]{LLR83})
\begin{equation}\begin{split}
\big|P\big(M_{\mathbf{n}} \leq u_{\mathbf{n}}(c)\big) &- \Phi(u_{\mathbf{n}}(c))^{n^2}\big|\\
&\leq L(\delta) \sum_{(i,j), (k,l) \in \{1,2,\ldots,n\}^2 \atop (i,j)\neq (k,l)}\Cov\big(X_{(i,j)},X_{(k,l)}\big) \exp\big(-\frac{(u_{\mathbf{n}}(c))^2}{1 + r_{i-k,j-l}}\big)\\
&\leq 4L(\delta) n^2 \sum_{0\leq i, j\leq n \atop (i,j)\neq (0,0)}r_{ij} \exp\left(-\frac{(u_{\mathbf{n}}(c))^2}{1 + r_{i,j}}\right),
\end{split}
\label{berman1}
\end{equation}
where $L(\delta)$ is a constant depending only on $\delta$ and we have used the stationarity and the fact that $r_{ij} > 0$, $i,j\in \GZ$.
Repeating the steps of the proof of  \cite[Lemma 4.3.2]{LLR83}, choose $\alpha,$ $0<\alpha<\frac{1 - 3\delta}{1+\delta},$ (see (\ref{ex:e2})) and split the sum in the last line of (\ref{berman1}) in two parts $\Sigma_1(n) = \sum_{(i,j)\in A_n}$ and $\Sigma_2(n) = \sum_{(i,j)\in B_n},$ where 
\[A_n = \{\lceil n^\alpha \rceil,\ldots, n\}\times \{\lceil n^\alpha\rceil,\ldots,  n\} \text{ and } B_n = \{0,1,\ldots, n\}^2\setminus(A_n \cup \{\mathbf{0}\}).\]

First let us find the asymptotics of the part involving $\Sigma_2(n)$. We have for large $n$ \begin{align*}
 4L(\delta) n^2 \Sigma_2(n) &\leq 4 L(\delta) n^2 \big(2 n^{1+\alpha} -  \big(\lceil n^{\alpha}\rceil - 1) ^2\big)\exp\left(-\frac{(u_{\mathbf{n}}(c))^2}{1+\delta}\right) \\
&\leq 8 L(\delta) K(c)^{\frac{2}{1+\delta}} n^{3+\alpha} \left(\frac{u_{\mathbf{n}}(c)}{n^2}\right)^{\frac{2}{1+\delta}}\tag*{by (\ref{u_n1})}\\
&\sim 8 L(\delta) K(c)^{\frac{2}{1+\delta}} (4 \ln n)^{\frac{1}{1+\delta}} n^{\alpha + 3 - \frac{4}{1+\delta}} \to 0, \tag*{by (\ref{u_n1_bis}) and the choice of $\alpha$.}
\end{align*}

Next, let us notice that for $i, j \geq \lceil n^{\alpha}\rceil$ and $n$ large enough
\[ r_{ij} \leq \frac{\ln (\ln n^{\alpha})}{\big(\ln n^{\alpha}\big)^2} \leq \big(2/\alpha^2\big) \frac{\ln\ln n}{(\ln n)^2}. \]
Therefore, setting $\delta_n' = \sup_{i,j\in A_n} r_{i,j}$ and using (\ref{u_n1_bis}) we obtain that 
$ \delta_n' (u_{\mathbf{n}}(c))^2 \to 0$, as $n\to\infty$. Keeping this relation in mind we can   
proceed as follows.
\begin{align*}
4L(\delta) n^2 \Sigma_1(n) &=  4L(\delta) n^2 \sum_{(i,j)\in A_n} r_{ij} \exp\left(-\frac{(u_{\mathbf{n}}(c))^2}{1+r_{ij}}\right)\\ &= 4L(\delta) n^2 \exp(-(u_{\mathbf{n}}(c))^2) 
\sum_{(i,j)\in A_n} r_{ij} \exp\left(\frac{(u_{\mathbf{n}}(c))^2 r_{ij}}{1+r_{ij}}\right) \\
&\leq 4L(\delta) (K(c))^2 n^2    \Big(\frac{u_{\mathbf{n}}(c)}{n^2}\Big)^2 
n^2 \delta_n' \exp\big(\delta_n^\prime (u_{\mathbf{n}}(c))^2\big) \tag*{by (\ref{u_n1})}\\
&= 4L(\delta)(K(c))^2 \delta_n^\prime (u_{\mathbf{n}}(c))^2 \exp(\delta_n^\prime (u_{\mathbf{n}}(c))^2) \to 0,\ \ \text{ as $n\to\infty$}.
\end{align*}

\subsubsection{There is no global phantom distribution function}
 
 Let us consider the monotone curve
 \[ \pmb{\psi}(n) = \big( \lfloor n/\ln n\rfloor, \lfloor\ln n\rfloor\big), n\in \GN.\]
 By Proposition \ref{prop:zero}, it is enough to show that $\Phi$ {\em is not} a phantom distribution function for $\{X_{(i,j)}, (i,j)\in\mathbb{Z}^2\}$ along $\pmb{\psi}$. Notice that the desired property is in agreement with the statement of \cite[Theorem 6.5.1]{LLR83}, for we have 
\[ \pmb{\psi}(n)^* \sim n \text{\ \ and \ \ } r_{\pmb{\psi}(n)}\ln n \to \gamma_1 \gamma_2 >0.\]
The structure of random variables $M_{\pmb{\psi}(n)}$ is however more complicated than just partial maxima of a stationary Gaussian sequence and therefore we have to perform carefully all computations.

We will show first that 
\begin{equation}
\label{eq:prop2}
\sup_{x\in\GR} \big| P\big( M_{\pmb{\psi}(n)} \leq x\big) - 
P\big( \widetilde{M}_n \leq x\big)\big| \to 0, \text{ as } n\to\infty,
\end{equation}
where for each $n\in\GN$ $\widetilde{M}_n$ is the maximum  of $\pmb{\psi}(n)^*$ standard normal random variables $\xi_1, \xi_2,\ldots, \xi_{\pmb{\psi}(n)^*}$ with $\rho_n = \text{cov}(\xi_i, \xi_j) = \frac{\gamma_1\gamma_2}{\ln n},$ $i\neq j$.
As in the case of (\ref{toprove1}),  we have to prove that 
\[ P\big(M_{\psi(n)} \leq w_n(c)\big) = P\big(\widetilde{M}_{n} \leq w_n(c)\big) + o(1),\] 
for sequences of levels $\{w_n(c)\}$ such that $P\big(\widetilde{M}_{n} \leq w_n(c)\big) \to c \in (0,1)$. Later we shall show that $\{w_n(c)\}$ satisfies 
\begin{equation}\label{eq:wuen}
\exp\Big( -\frac{w_n(c)^2}{2}\Big) \leq K'(c) \frac{w_n(c)}{n}\ \text{ \ and \ } w_n(c) \sim \sqrt{2 \ln n}.
\end{equation}
 
 By virtue of \cite[Theorem 4.2.1]{LLR83}, we have 
 \begin{equation}
 \begin{split}
 \Big|P\big(M_{\psi(n)} \leq &w_n(c)\big) - P\big(\widetilde{M}_{n} \leq w_n(c)\big)\Big|\\ &\leq 4 L(\delta) n \sum_{(i,j)\in D_n} |r_{ij} - \rho_n| \exp\left( - \frac{(w_n(c))^2}{1 + {\omega}_{ij}}\right),\label{berman2}
 \end{split}
 \end{equation}
  where $D_n = \{(i,j): 0\leq i\leq \frac{n}{\ln n}, 0 \leq j \leq \ln n\}\setminus\{(0,0)\}$ and ${\omega}_{ij} = \max\{r_{ij}, \rho_n\} = r_{ij}$ on $D_n$. 
   Let us split the set of indices $D_n$ in three smaller parts, $D_n = D_n^{(1)} \sqcup D_n^{(2)} \sqcup D_n^{(3)}$, where $D_n^{(1)} =
 \{(i,j): 0\leq i\leq n^\alpha, 0\leq j\leq \ln n\}\setminus\{(0,0)\},$ $D_n^{(2)}= \{(i,j): n^\alpha< i\leq \frac{n}{\ln n}, 0\leq j\leq (\ln n)^\beta\}$ and $D_n^{(3)} = \{(i,j): n^\alpha < i\leq \frac{n}{\ln n}, (\ln n)^\beta< j\leq \ln n\}$, where the parameters $\alpha$ and $\beta$ will be chosen later.
 
 By (\ref{ex:e2}) we have $\delta < (1-2\gamma_1)/(1 + 2\gamma_1)$, or, equivalently, $2\gamma_1 < (1-\delta)/(1+\delta)$. So we can find $\alpha$ satisfying
 \[ 2\gamma_1 < \alpha < \frac{1 - \delta}{1+\delta}.\]
 From (\ref{eq:wuen}) we have, as $n\to\infty$,
 \begin{align*}
 n \sum_{(i,j)\in D_n^{(1)}} |r_{ij} &- \rho_n| \exp\Big( - \frac{(w_n(c))^2}{1 + r_{ij}}\Big) \leq  n\, n^\alpha \ln n \exp\left( - \frac{(w_n(c))^2}{1 + \delta}\right)  \\
 &\leq (K'(c))^{\frac{2}{1+\delta}} n^{\alpha+1} \ln n \left(\frac{w_n(c)}{n}\right)^{\frac{2}{1+\delta}} \sim  (\sqrt{2}K'(c))^{\frac{2}{1+\delta}}
 n^{\alpha - \frac{1 - \delta}{1+\delta}} (\ln n)^{\frac{2+\delta}{1+\delta}} \to 0. 
 \end{align*}
 
 Estimation of the term related to the sum over $(i,j)\in D_n^{(2)}$ is a bit more challenging. For indices $(i,j)\in D_n^{(2)}$ we have $|r_{ij} - \rho_n| \leq r_{ij} \leq \frac{\gamma_1}{\alpha} \frac{\ln\ln n}{\ln n} =: \delta_n$. Therefore we obtain 
 \begin{align}
 n \sum_{(i,j)\in D_n^{(2)}} |r_{ij} - \rho_n| &\exp\Big( - \frac{(w_n(c))^2}{1 + r_{ij}}\Big) \nonumber\\
 &\leq \frac{\gamma_1}{\alpha} n \frac{n}{\ln n} (\ln n)^\beta \frac{\ln\ln n}{\ln n} \exp\left(-\frac{(w_n(c))^2}{1 + \delta_n}\right) \nonumber \\
 &\leq \frac{\gamma_1}{\alpha} (K'(c))^2 n^2 (\ln n)^{\beta-2} \ln\ln n \left(\frac{\sqrt{2 \ln n}}{n}\right)^2 n^{2\delta_n} \nonumber \\
 &= \frac{\gamma_1}{\alpha} (K'(c))^2 (\ln n)^{\beta-1}
 \ln\ln n \exp\left(2\frac{\gamma_1}{\alpha} \frac{\ln\ln n}{\ln n} \ln n\right)\nonumber \\
 & = \frac{\gamma_1}{\alpha} (K'(c))^2 (\ln n)^{\beta + 2\gamma_1/\alpha -1} \ln\ln n.\label{C2}
 \end{align}
 Because $\gamma_1 < \alpha/2$, we can find positive $\beta$ satisfying the inequality $\beta + 2\gamma_1/\alpha -1 < 0.$ For such $\beta$ the expression in  (\ref{C2}) tends to $0$. 
 
 It remains to show that the term related to the sum over  $(i,j)\in D_n^{(3)}$ vanishes as $n\to\infty$.
 Denote $\delta_n' = \max_{(i,j)\in D_n^{(3)}} r_{ij}$ and notice that $\delta_n'\leq \frac{\gamma_1\gamma_2}{\alpha\beta}/\ln n.$ We need a special decomposition.
 \begin{align*}
 n \sum_{(i,j)\in D_n^{(3)}} &|r_{ij} - \rho_n| \exp\left( - \frac{(w_n(c))^2}{1 + r_{ij}}\right) 
 \leq  n \exp\left( - \frac{(w_n(c))^2}{1 + \delta_n'}\right) \sum_{(i,j)\in D_n^{(3)}} \big(r_{ij} - \rho_n\big) \\
 &= \Big\{ \frac{n^2}{\ln n}\exp\left( - \frac{(w_n(c))^2}{1 + \delta_n'}\right)\Big\} \cdot \Big\{\frac{\ln n}{n}\sum_{(i,j)\in D_n^{(3)}} \big(r_{ij} - \rho_n\big) \Big\} = I_1(n) \cdot I_2(n).\label{last}
 \end{align*}
Using  (\ref{eq:wuen}) we obtain the boundedness of $\{I_1(n)\}$.
 \begin{align*}
 I_1(n)  = \frac{n^2}{\ln n}\exp\Big( - \frac{(w_n(c))^2}{1 + \delta_n'}\Big)
  &\leq (K'(c))^2 \frac{n^2}{\ln n} \left(\frac{w_n(c)}{n}\right)^2 \left(\frac{n}{w_n(c)}\right)^{2\frac{\gamma_1\gamma_2}{\alpha\beta}/\ln n} \\ 
  & \sim (K'(c))^2 \frac{n^2}{\ln n} \frac{2\ln n}{n^2}e^{2\frac{\gamma_1\gamma_2}{\alpha\beta}}(1+ o(1)) = O(1).
 \end{align*}
 
 We will conclude the proof of (\ref{eq:prop2}) by showing that $I_2(n) \to 0$ as $n\to\infty$. We have
 \begin{align*}
 \frac{\ln n}{n}\sum_{(i,j)\in D_n^{(3)}} \big(r_{ij} &- \rho_n\big) = \frac{\ln n}{n}\sum_{(i,j)\in D_n^{(3)}} r_{ij} - \frac{\ln n}{n}\left(\frac{n}{\ln n} - n^\alpha\right)\left(\ln n - (\ln n)^\beta\right) \rho_n\\
 &= \gamma_1\gamma_2\frac{ \ln n}{n} \Big(\sum_{i=n^\alpha}^{n/\ln n} \frac{\ln\ln i}{\ln i}\Big) \Big(\sum_{j=(\ln n)^\beta}^{\ln n} \frac{1}{\ln j}\Big) - \gamma_1\gamma_2(1 + O((\ln n)^{\beta - 1})).
 \end{align*}
We shall estimate the two sums appearing above. By integration by parts we have for $ 1 < a < b$
\[ \int_a^b \ln t \frac{e^t}{t} \,dt \leq \frac{a}{a-1} \ln b \frac{e^b}{b}, \ \text{ and } \ \ \int_a^b \frac{e^t}{t} \,dt \leq \frac{a}{a-1}\frac{e^b}{b}.\]
Therefore 
 \begin{align*}
 \sum_{i=n^\alpha}^{n/\ln n} \frac{\ln\ln i}{\ln i} &\leq \int_{n^{\alpha/2}}^{n/\ln n} \frac{\ln \ln y}{\ln y} dy = \int_{\alpha \ln n/2}^{\ln n - \ln \ln n} \ln t\frac{e^t}{t}\,dt \\ &\leq \frac{\alpha \ln n/2}{\alpha \ln n/2 -1} \ln ( \ln n - \ln \ln n)\frac{e^{\ln n - \ln \ln n}}{\ln n - \ln \ln n} = \frac{n \ln\ln n}{(\ln n)^2} \left(1 + O\bigr(\frac{\ln\ln n}{\ln n}\bigl)\right).
 \end{align*}
 Similarly
\begin{align*} \sum_{j=(\ln n)^\beta}^{\ln n} \frac{1}{\ln j} &\leq \int_{(\ln n)^{\beta/2}}^{\ln n} \frac{1}{\ln y} dy = \int_{\beta \ln\ln n/2}^{\ln\ln n} \frac{e^t}{t} dt \\
&\leq \frac{\beta \ln\ln n/2}{\beta \ln\ln n/2 - 1} \frac{e^{\ln\ln n}}{\ln\ln n} = \frac{\ln n}{\ln\ln n}\left(1 + O\bigl(\frac{1}{\ln\ln n}\bigr)\right).
 \end{align*}
Finally we get
 \[I_2(n) \leq \gamma_1\gamma_2 \Big\{\Big(1 + O\bigr(\frac{1}{\ln\ln n}\bigl)\Big)\left(1 + O\bigr(\frac{\ln\ln n}{\ln n}\bigl)\right) - (1 + O((\ln n)^{\beta - 1}))\Big\} \conver 0, \ \text{ as $n\to \infty$}.\]

To complete the proof of (\ref{eq:prop2}) we have to verify
(\ref{eq:wuen}).

\begin{prop}\label{prop:emtylda}
There exists a continuous strictly increasing distribution function $H$ such that for every $x\in \GR$
\[ P\big( a_n\big(\widetilde{M}_n - b_n\big)\leq x\big) \to H(x),\]
where 
\[a_n = \sqrt{2\ln n},\quad b_n = \sqrt{2\ln n} - \frac{\ln\ln n + \ln (4\pi)}{2\sqrt{2\ln n}}, \ \ n\in \GN.\]

For each $c\in (0,1)$, let $x= x(c)$ be such that $H(x) = c$
and let 
$y_n(c) = x(c)/a_n + b_n$.

If $P\big( \widetilde{M}_n \leq w_n(c)\big) \to c \in (0,1)$, then $| w_n(c) - y_n(c)| = o(1/\sqrt{\ln n})$ and  $\{w_n(c)\}$ satisfies (\ref{eq:wuen}).
\end{prop}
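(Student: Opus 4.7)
The plan is to reduce $\widetilde{M}_n$ to a maximum of i.i.d. Gaussians via the equi-correlated representation $\xi_i = \sqrt{1-\rho_n}\,Z_i + \sqrt{\rho_n}\,Y$, $i=1,\ldots,N_n$, where $N_n := \pmb{\psi}(n)^* = \lfloor n/\ln n\rfloor\lfloor\ln n\rfloor$ and $Z_1,\ldots,Z_{N_n},Y$ are i.i.d.\ $N(0,1)$. Setting $Z^*_n := \max_{i\leq N_n} Z_i$, this gives $\widetilde{M}_n = \sqrt{1-\rho_n}\,Z^*_n + \sqrt{\rho_n}\,Y$, and hence
\begin{equation*}
a_n(\widetilde{M}_n - b_n) = \sqrt{1-\rho_n}\,a_n(Z^*_n - b_n) + \bigl(\sqrt{1-\rho_n}-1\bigr)a_n b_n + a_n\sqrt{\rho_n}\,Y.
\end{equation*}

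Since $N_n = n - O(n/\ln n)$ implies $\ln N_n - \ln n = O(1/\ln n)$, the easily checked relations $a_n/a_{N_n}\to 1$ and $a_n(b_n - b_{N_n})\to 0$, combined with the classical Fisher--Tippett theorem for i.i.d.\ standard Gaussians, yield $a_n(Z^*_n - b_n)\xrightarrow{d} W\sim\Lambda$, where $\Lambda(x)=\exp(-e^{-x})$. Using $\sqrt{1-\rho_n} = 1 - \rho_n/2 + O(\rho_n^2)$ and the fact that $\rho_n\ln n = \gamma_1\gamma_2$ \emph{exactly}, one computes
\begin{equation*}
a_n b_n = 2\ln n - \tfrac{1}{2}(\ln\ln n + \ln 4\pi),\quad a_n\sqrt{\rho_n} = \sqrt{2\gamma_1\gamma_2},\quad \bigl(\sqrt{1-\rho_n}-1\bigr)a_n b_n \to -\gamma_1\gamma_2.
\end{equation*}
Independence of $W$ and $Y$ together with Slutsky's theorem then gives
\begin{equation*}
a_n(\widetilde{M}_n - b_n)\xrightarrow{d} W - \gamma_1\gamma_2 + \sqrt{2\gamma_1\gamma_2}\,Y',
\end{equation*}
with $W\sim\Lambda$ and $Y'\sim N(0,1)$ independent. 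Define $H$ as the distribution function of the right-hand side. As a convolution of the strictly increasing, continuous Gumbel law with a non-degenerate centred Gaussian (up to a deterministic shift), $H$ is itself continuous and strictly increasing on $\GR$.

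For the second assertion, continuity of $H$ upgrades pointwise convergence to uniform convergence (P\'olya's theorem), and strict monotonicity of $H$ then forces $a_n(w_n(c) - b_n)\to x(c) = H^{-1}(c)$, equivalently $w_n(c) - y_n(c) = o(1/a_n) = o(1/\sqrt{\ln n})$. Substituting $w_n(c) = y_n(c) + o(1/\sqrt{\ln n})$ into $w_n(c)^2/2$ and expanding produces
\begin{equation*}
\frac{w_n(c)^2}{2} = \ln n - \tfrac{1}{2}(\ln\ln n + \ln 4\pi) + x(c) + o(1),
\end{equation*}
whence $\exp(-w_n(c)^2/2) \sim \sqrt{4\pi\ln n}\,e^{-x(c)}/n$. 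Together with $w_n(c)\sim\sqrt{2\ln n}$ this immediately establishes both requirements in (\ref{eq:wuen}) with $K'(c) := (1+\varepsilon)\sqrt{2\pi}\,e^{-x(c)}$ for any fixed $\varepsilon > 0$ and $n$ large enough.

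The main obstacle, in my view, is recognising that $\rho_n\ln n = \gamma_1\gamma_2$ is exactly the critical scaling in which both the deterministic shift $(\sqrt{1-\rho_n}-1)a_n b_n$ and the Gaussian perturbation $a_n\sqrt{\rho_n}\,Y$ yield finite, non-trivial limits rather than vanishing or diverging; in particular, it is the matching between the $\ln n$ in the denominator of $\rho_n$ and the $\ln n$ inside $a_n^2$ that pins down the non-degenerate limiting law $H$. Once this balance is identified, each remaining step reduces either to standard manipulations of the Fisher--Tippett normalizers or to elementary real-analysis estimates.
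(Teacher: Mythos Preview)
Your proof is correct and follows essentially the same route as the paper: both arguments rest on the equi-correlated representation $\widetilde{M}_n = \sqrt{1-\rho_n}\,\widehat{M}_n + \sqrt{\rho_n}\,\zeta$ with $\widehat{M}_n$ a maximum of i.i.d.\ standard normals, combined with the classical Gumbel limit and the critical scaling $\rho_n\ln n = \gamma_1\gamma_2$, arriving at the same limit $H(x)=\int \exp(-e^{-x-\gamma_1\gamma_2+\sqrt{2\gamma_1\gamma_2}\,z})\varphi(z)\,dz$. The only cosmetic difference is that the paper conditions on $\zeta=z$ and passes to the limit under the integral, whereas you use the algebraic decomposition together with joint convergence of the independent components $(a_n(Z^*_n-b_n),Y)$ (which is what your ``Slutsky'' step really needs); the treatment of the second assertion is identical in substance.
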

\begin{proof}
	The proof of the first part of the proposition coincides, in fact, with a part of the proof of \cite[Theorem 6.5.1]{LLR83} (see also \cite{MiYl75}). But these results deal basically with {\em partial maxima of stationary sequences} and are not formulated in the scheme of triangular arrays, as is required by our setting. Therefore we provide here a complete argument.
	
	We may and do assume that $\pmb{\psi}(n)^* = n$. By the definition, $\widetilde{M}_n$ is equal in law to 
	$  \sqrt{1 - \rho_n}\,\widehat{M}_n+ \sqrt{\rho_n}\, \zeta$,
	where $\widehat{M}_n$ is the maximum of a sequence of $n$ independent standard normal random variables and $\zeta$ is standard normal independent of $\widehat{M}_n$. We thus obtain
	\begin{align*} P\big(a_n\big(\widetilde{M}_{n} - b_n\big)\leq x\big) &= P\Big( \sqrt{1 - \rho_n}\,\widehat{M}_n+ \sqrt{\rho_n}\, \zeta \leq x/a_n + b_n\Big)\\ 
	&= \int_{-\infty}^{\infty} P\Big(\widehat{M}_n \leq (1 - \rho_n)^{-1/2} \big(x/a_n + b_n - \sqrt{\rho_n} z\big) \Big) \,\varphi(z)\, dz \\
	& =\int_{-\infty}^{\infty} \Big(\Phi \big( (1 - \rho_n)^{-1/2} \big(x/a_n + b_n - \sqrt{\rho_n} z\big) \big)\Big)^n\, \varphi(z)\, dz \\
	&\conver \int_{-\infty}^{+\infty}\exp(-\exp(-x -\gamma_1\gamma_2 + \sqrt{2\gamma_1\gamma_2} z))\varphi(z)\,dz = : H(x),
	\end{align*}
	  because (see the proof of \cite[Theorem 6.5.1]{LLR83})
	\[(1 - \frac{\gamma_1\gamma_2}{\ln n})^{-1/2} (x/a_n + b_n - \sqrt{\frac{\gamma_1\gamma_2}{\ln n}} z) = \frac{x + \gamma_1\gamma_2 - \sqrt{2\gamma_1\gamma_2} z}{a_n} + b_n + o((a_n)^{-1}).\]
	 
Assume that $P\big( \widetilde{M}_n \leq w_n(c)\big) \to c \in (0,1)$. Consider levels $y_n(c) = x(c)/a_n + b_n$. Let $x' < x(c) < x^{\prime\prime}$. We have eventually
\[ \frac{x' - x(c)}{a_n} = x'/a_n + b_n - y_n(c) \leq w_n(c) - y_n(c) \leq x^{\prime\prime}/a_n + b_n - y_n(c) = \frac{x^{\prime\prime} - x(c)}{a_n}.\]
Because $x'$ and $x^{\prime\prime}$ can be chosen arbitrarily close, 
\[  | w_n(c) - y_n(c)| = o((a_n)^{-1}).\]
This clearly implies (\ref{eq:wuen}).
\end{proof}

Given (\ref{eq:prop2}), it is not difficult to prove that $\Phi(x)$ {\em is not } a phantom distribution function for 
$\{ X_{(i,j)}\}$ along $\pmb{\psi}$. Because $H(x)$ does not coincide with the Gumbel standardized distribution $H_0$,  we have $H_0(x_0) \neq H(x_0)$ for some $x_0$.  And we have proved
that $
P\big( \widetilde{M}_n \leq x_0/a_n + b_n\big) \to H(x_0)$, while we know that $\Phi(x_0/a_n + b_n)^n \to H_0(x_0)$.

\subsection{Extremal indices}

We will use the results of the previous sections to provide a complete theory of the extremal index for random fields. Recall that $F$ is the marginal distribution function of $X_{\bn}$.

\begin{defin}\label{def:index}
	We say that $\theta\in(0,1]$ is {\em the extremal index} for $\{X_\bn\}$, if the function $G$ given by $G(x):=P(X_\mathbf{0}\leq x)^{\theta}$, $x\in\GR $, is a phantom distribution function for $\{X_\bn\}$.
	
	If $G(x):=P(X_\mathbf{0}\leq x)^{\theta}$, for some $\theta\in(0,1]$, is a sectorial distribution function for $\{X_\bn\}$, then we say that $\theta$ is {\em the sectorial extremal index} for  $\{X_\bn\}$. 
\end{defin}

\begin{rem}
	This definition of the (global) extremal index is taken from  \cite{JS}. 
We note that a ``more classical'' definition of the (global) extremal index for random fields was proposed in \cite{CHOI}, see also  
\cite{Tu06} and \cite{FePe08}. These papers, however, did not bring conclusive results. For instance, the 
formula for calculating the extremal index proposed in \cite{FePe08} does not work for a simple  $1$-dependent random field given in \cite[Example 5.5]{JS}.

Examples of calculation of the global extremal index for a variety of random fields on the lattice $\GZ^d$ can be found in \cite{BaTa14} (moving averages and moving maxima), \cite{JS} (models with local dependence) and \cite{WuSa18} (regularly varying random fields). Some related work for Gaussian random fields is given in \cite{Ling19}.
\end{rem}

 \begin{rem}
 As the example provided in Section \ref{sec:example} shows, the notion of the sectorial extremal index is essentially weaker than the notion of the (global) extremal index. Indeed, the random field considered in this example has the {\em sectorial extremal index} $\theta=1$, while the (global) extremal index does not exist.
 \end{rem}

Within the theory of phantom distribution functions we have nice criteria for the existence of the extremal index and the sectorial extremal index.

\begin{theo}\label{theo11}
	Let $\{X_\bn : \bn\in\GZ^d\}$ be a stationary random field.
	Then $\{X_\bn\}$ has the extremal index $\theta\in (0,1]$ if, and only if,  there exist $\gamma_{or},\gamma_{in} \in (0,1)$ and a {\em strongly monotone} field of levels $\{v_{\bn}\,;\, \bn \in \GN^d\}$  such that
		\begin{equation}\label{eq:main11}
		 P( M_{\bn} \leq v_{\bn}) \to \gamma_{or}, \ \ F(v_{\bn})^{\bn^*} \to \gamma_{in},\ \text { as $\bn \to \pmb{\infty}$}, \quad \theta = \frac{\ln \gamma_{or}}{\ln \gamma_{in}},
		 \end{equation}
	and for every monotone curve  $\pmb{\psi}$ and every $T > 0$ {\bf Condition $\mathbf{B}_T^{\pmb{\psi}}(\{v_{\pmb{\psi}(n)}\})$} holds.	
\end{theo}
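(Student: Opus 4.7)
The natural strategy is to leverage Theorem~\ref{theo1} and relate the continuous phantom distribution function it supplies to the particular form $F^\theta$ demanded by Definition~\ref{def:index}. Throughout we proceed under the standing tacit assumption that $F$ is continuous.

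For the forward direction, suppose $\theta \in (0,1]$ is the extremal index, so that $G(x) := F(x)^\theta$ is a continuous phantom distribution function. The ``only if'' part of Theorem~\ref{theo1} produces a strongly monotone field of levels $\{v_\bn\}$ with $P(M_\bn \leq v_\bn) \to \gamma$ for some $\gamma \in (0,1)$, together with Condition $\mathbf{B}_T^{\pmb{\psi}}(\{v_{\pmb{\psi}(n)}\})$ for every monotone curve $\pmb{\psi}$ and every $T>0$. Set $\gamma_{or} := \gamma$. The phantom property at the level $v_\bn$ gives $F(v_\bn)^{\theta \bn^*} \to \gamma_{or}$, hence $F(v_\bn)^{\bn^*} \to \gamma_{or}^{1/\theta}$; setting $\gamma_{in} := \gamma_{or}^{1/\theta} \in (0,1)$ then yields the identity $\theta = \ln \gamma_{or}/\ln \gamma_{in}$ automatically.

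For the converse, the ``if'' part of Theorem~\ref{theo1} hands us a continuous phantom distribution function $G^*$. Combining $G^*(v_\bn)^{\bn^*} \to \gamma_{or}$ (from the phantom property applied to $v_\bn$) with the hypothesis $F(v_\bn)^{\bn^*} \to \gamma_{in}$ shows that $G^*$ and $F^\theta$ agree to leading order along $\{v_\bn\}$. To conclude that $F^\theta$ is itself a phantom distribution function, it is enough (by the standard tail-level characterization) to prove that for every $\tau \in (0,\infty)$ and every level sequence $\{u_\bn\}$ with $\bn^*(1 - F(u_\bn)) \to \tau$ one has $P(M_\bn \leq u_\bn) \to e^{-\tau \theta}$. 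I would reduce any such $\{u_\bn\}$ to the distinguished field $\{v_\bn\}$ by rescaling: for each $\bn$ choose $\bm_\bn$ with $\bm_\bn^* \sim (-\ln \gamma_{in}/\tau)\, \bn^*$; by strong monotonicity $F(v_{\bm_\bn})^{\bm_\bn^*} \to \gamma_{in}$, so $F(v_{\bm_\bn})^{\bn^*} \to e^{-\tau}$, and continuity of $F$ forces $u_\bn$ and $v_{\bm_\bn}$ to be asymptotically interchangeable at the tail scale. Iterating Condition $\mathbf{B}_T^{\pmb{\psi}}$ along a monotone curve passing through both scales decomposes $P(M_\bn \leq u_\bn)$ into roughly $\bn^*/\bm_\bn^*$ independent-like factors, each approximating $P(M_{\bm_\bn} \leq v_{\bm_\bn}) \to \gamma_{or}$, giving the limit $\gamma_{or}^{\bn^*/\bm_\bn^*} = e^{-\tau \theta}$.

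The main obstacle is this propagation step. In one dimension the analogous reduction is classical (cf.\ the Leadbetter-type arguments behind \cite[Theorem 2]{DJL15}), but in the $d$-dimensional lattice setting the tiling of $\bn$ by $\bm_\bn$-sized sub-blocks is only approximate: the ratios $n_i/m_i$ need not be integer, and the tiling produces lower-dimensional boundary remainders that must be swept under a negligibility bound. Moreover Condition $\mathbf{B}_T^{\pmb{\psi}}$ has to be applied uniformly along a single monotone curve visiting both scales simultaneously. The multi-dimensional block-breaking scheme illustrated in Figure~\ref{fig_blocks}, together with the fact that strong monotonicity reduces $v_\bn$ to a function of $\bn^*$ alone, should control these remainders and complete the identification of $F^\theta$ as a phantom distribution function for $\{X_\bn\}$.
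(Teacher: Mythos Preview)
Your forward direction matches the paper's: apply the necessity half of Theorem~\ref{theo1} with $G=F^\theta$ to obtain the strongly monotone levels and Condition~$\mathbf{B}_T^{\pmb{\psi}}$, then read off $\gamma_{in}=\gamma_{or}^{1/\theta}$.

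For the converse you make exactly the right opening observation --- once Theorem~\ref{theo1} supplies a continuous phantom distribution function $G^*$, one has $G^*(v_\bn)^{\bn^*}\to\gamma_{or}$ and $(F^\theta(v_\bn))^{\bn^*}=(F(v_\bn)^{\bn^*})^\theta\to\gamma_{in}^\theta=\gamma_{or}$ --- but then you abandon it in favour of a direct block-tiling argument for arbitrary level sequences $\{u_\bn\}$, whose difficulties (non-integer block ratios, boundary remainders, running $\mathbf{B}_T^{\pmb{\psi}}$ along a curve visiting two scales) you rightly flag as unresolved. The paper avoids all of this: it simply applies Lemma~\ref{lem:key1}, which says precisely that two distribution functions satisfying $G(v_n)^{\phi(n)}\to\gamma$ and $H(v_n)^{\phi(n)}\to\gamma$ along a common non-decreasing level sequence (with $\phi(n)/\phi(n+1)\to 1$) are strictly tail-equivalent. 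Taking $\phi(n)=\pmb{\psi}(n)^*$ along any monotone curve gives $(1-G^*(x))/(1-F(x)^\theta)\to 1$ as $x\to F_*-$, and then \cite[Proposition~1]{DJL15} (cf.\ (\ref{eq:equiv})) yields $\sup_x|G^*(x)^n-F(x)^{\theta n}|\to 0$, so $F^\theta$ inherits the phantom property from $G^*$ in one stroke. Your route would, if completed, re-derive by hand in the harder lattice setting exactly what this tail-equivalence machinery already packages; the paper's proof is two lines once Lemma~\ref{lem:key1} is available.
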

 
\begin{theo}\label{th:direct11}
	Let $\{X_\bn : \bn\in\GZ^d\}$ be a stationary random field.
	Then  $\{X_\bn\}$ has the  sectorial extremal index $\theta \in (0,1]$ if, and only if, 
	there exist $\gamma_{or}, \gamma_{in}  \in (0,1)$ and a {\em non-decreasing} sequence of levels $\{v_{\pmb{\Delta}(n)}\}$, $n \in \GN$, such that
		\begin{equation}\label{eq:mainbis11} 
		P( M_{\pmb{\Delta}(n)} \leq v_{\pmb{\Delta}(n)}) \to \gamma_{or},\ \ F(v_{\pmb{\Delta}(n)})^{n^d} \to \gamma_{in}, \ \  \text { as $n \to \infty$},\quad \theta = \frac{\ln \gamma_{or}}{\ln \gamma_{in}},
		\end{equation}
		and for every $T > 0$ Condition $\mathbf{B}_T^{\pmb{\Delta}}(\{v_{\pmb{\Delta}(n)}\})$ holds.	
\end{theo}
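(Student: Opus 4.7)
The plan is to deduce Theorem~\ref{th:direct11} from Theorem~\ref{th:direct}, using the extra tail convergence $F(v_{\pmb{\Delta}(n)})^{n^d}\to\gamma_{in}$ to identify the abstract continuous directional phantom produced by that theorem with the concrete candidate $F^\theta$.

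For necessity, assume $\theta\in(0,1]$ is the sectorial extremal index, so that $F^\theta$ is a continuous $\pmb{\Delta}$-directional phantom distribution function (in particular $F$ must be continuous). The implication (ii)$\Rightarrow$(iii) of Theorem~\ref{th:direct} applied with $\pmb{\psi}=\pmb{\Delta}$ yields $\gamma_{or}\in(0,1)$, a non-decreasing sequence $\{v_{\pmb{\Delta}(n)}\}$ with $P(M_{\pmb{\Delta}(n)}\leq v_{\pmb{\Delta}(n)})\to\gamma_{or}$, and Condition $\mathbf{B}_T^{\pmb{\Delta}}(\{v_{\pmb{\Delta}(n)}\})$ for every $T>0$. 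Substituting $x=v_{\pmb{\Delta}(n)}$ into the uniform phantom bound $\sup_x|P(M_{\pmb{\Delta}(n)}\leq x)-F(x)^{\theta n^d}|\to 0$ produces $F(v_{\pmb{\Delta}(n)})^{\theta n^d}\to\gamma_{or}$, and therefore $F(v_{\pmb{\Delta}(n)})^{n^d}\to\gamma_{or}^{1/\theta}=:\gamma_{in}\in(0,1)$, with $\theta=\ln\gamma_{or}/\ln\gamma_{in}$.

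For the converse, the implication (iii)$\Rightarrow$(i) of Theorem~\ref{th:direct} supplies a continuous phantom distribution function $G$ along $\pmb{\Delta}$ obeying $\sup_x|P(M_{\pmb{\Delta}(n)}\leq x)-G(x)^{n^d}|\to 0$. Setting $\theta:=\ln\gamma_{or}/\ln\gamma_{in}$, the statement that $\theta$ is the sectorial extremal index reduces, again via Theorem~\ref{th:direct}, to verifying that $F^\theta$ is likewise a continuous phantom along $\pmb{\Delta}$, i.e.\ to the uniform identification $\sup_x|G(x)^{n^d}-F(x)^{\theta n^d}|\to 0$. At the distinguished level the coincidence is immediate: $G(v_{\pmb{\Delta}(n)})^{n^d}\to\gamma_{or}=\gamma_{in}^\theta$ and $F(v_{\pmb{\Delta}(n)})^{\theta n^d}\to\gamma_{in}^\theta$. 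To extend it to every level I would execute a $d$-dimensional block-rescaling argument patterned on the one-dimensional extremal-index argument of \cite{DJL15}: given $w_n$ with $F(w_n)^{n^d}\to e^{-\tau}$ for some $\tau>0$, introduce the auxiliary scale $m_n\sim n(\tau_\infty/\tau)^{1/d}$, where $\tau_\infty=-\ln\gamma_{in}$, so that $F(w_n)^{m_n^d}\to\gamma_{in}$; at scale $m_n$ the level $w_n$ shares the $F$-tail profile of $v_{\pmb{\Delta}(m_n)}$. Iterating Condition $\mathbf{B}_T^{\pmb{\Delta}}$ on a partition of $\{1,\ldots,n\}^d$ into $(n/m_n)^d$ sub-boxes of side $m_n$ yields
\[
P(M_{\pmb{\Delta}(n)}\leq w_n)=P(M_{\pmb{\Delta}(m_n)}\leq w_n)^{(n/m_n)^d}+o(1),
\]
which, combined with the level-equivalence identity $P(M_{\pmb{\Delta}(m_n)}\leq w_n)\to\gamma_{or}$, gives $P(M_{\pmb{\Delta}(n)}\leq w_n)\to\gamma_{or}^{\tau/\tau_\infty}=e^{-\theta\tau}=\lim F(w_n)^{\theta n^d}$; a standard compactness/subsequence argument then upgrades this pointwise identity to the required uniform convergence.

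The main obstacle is the level-equivalence identity itself, which asserts that $\lim_m P(M_{\pmb{\Delta}(m)}\leq u_m)$ is determined, to leading order, solely by the $F$-tail parameter $m^d(1-F(u_m))$. Through the phantom approximation at scale $m_n$ this reduces to $m_n^d|G(w_n)-G(v_{\pmb{\Delta}(m_n)})|\to 0$; the analogous estimate $m_n^d|F(w_n)-F(v_{\pmb{\Delta}(m_n)})|\to 0$ is immediate from their common tail limit, but transferring it from $F$ to $G$ demands a Lipschitz-type comparison of the two distribution functions near the right endpoint, which has to be extracted from the phantom property itself by comparing $G$ and $F$ along a continuous family of non-decreasing interpolating level sequences.
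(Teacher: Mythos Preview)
Your necessity argument is correct and matches the paper's implicit reasoning.

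For sufficiency, however, you have taken a long detour around a door that is already open. The paper's argument is a two-line application of machinery already in place: from the hypotheses one has both
\[
G_{\pmb{\Delta}}(v_{\pmb{\Delta}(n)})^{n^d}\longrightarrow\gamma_{or}
\qquad\text{and}\qquad
\big(F^{\theta}(v_{\pmb{\Delta}(n)})\big)^{n^d}=\big(F(v_{\pmb{\Delta}(n)})^{n^d}\big)^{\theta}\longrightarrow\gamma_{in}^{\theta}=\gamma_{or},
\]
so Lemma~\ref{lem:key1} (with $\phi(n)=n^d$) gives that $G_{\pmb{\Delta}}$ and $F^{\theta}$ are \emph{strictly tail-equivalent}. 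By \cite[Proposition~1]{DJL15} this is exactly the uniform identification $\sup_x|G_{\pmb{\Delta}}(x)^{n^d}-F(x)^{\theta n^d}|\to 0$ you were aiming for. That is the whole proof.

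Your block-rescaling route is not wrong in spirit, but the ``main obstacle'' you isolate---the level-equivalence identity $P(M_{\pmb{\Delta}(m_n)}\leq w_n)\to\gamma_{or}$---is, after passing through the phantom $G_{\pmb{\Delta}}$, precisely the statement that $G_{\pmb{\Delta}}$ and $F^{\theta}$ are strictly tail-equivalent. So your proposed resolution (a Lipschitz-type comparison of $F$ and $G$ near the right endpoint, extracted from the phantom property along interpolating level families) is circular: you would be re-deriving tail-equivalence by hand in order to prove something equivalent to tail-equivalence. Lemma~\ref{lem:key1} already does this cleanly from the single shared limit $\gamma_{or}$ at the levels $v_{\pmb{\Delta}(n)}$, using nothing more than the monotonicity of $\{v_{\pmb{\Delta}(n)}\}$ and $n^d/(n+1)^d\to 1$. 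Replace your third and fourth paragraphs by an appeal to that lemma and the proof is complete.
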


\section{Proofs}
\subsection{Proof of Proposition \ref{prop:zero}}

Clearly, if $G$ is a phantom distribution function for $\{X_{\bn}\}$, then it is a phantom distribution function for $\{X_{\bn}\}$ along every monotone curve. So assume the latter property and suppose that $G$ {\em does not} satisfy $(\ref{def_strong_pdf})$. It follows that there exists a number $\varepsilon > 0$, a  monotone sequence $\bm(n) \to \pmb{\infty}$ and a sequence $\{x_n\}$ such that 
\[\big| P\big( M_{\bm(n)} \leq x_n\big) - G(x_n)^{\bm(n)^*}\big| > \varepsilon, \ \ n \in \GN.\]
The point is that $\bm(n)$ need not satisfy (\ref{eq:crucial}) and so it is not a monotone curve according to our definition. 
But we can always find a monotone curve $\pmb{\psi}(n)$ such that 
$\bm(n) = \pmb{\psi}(m_n)$ for some increasing  sequence $\{m_n\}$. Indeed, let us begin with $\bm(1)$ and connect it with $\bm(2)$ by a sequence of points that in each step increases only by one in one coordinate. Then proceed the same way with points $\bm(2)$ and $\bm(3)$, etc.  The obtained map $\pmb{\psi}(\cdot) : \GN \to \GN^d$ satisfies (\ref{eq:crucial}). And $G$ cannot be a phantom distribution function for $\pmb{\psi}$.

\subsection{The mixing-like condition}\label{MIXING}

Let $\beta_T^{\pmb{\psi}}(n,k)$ for $n,k\in\GN $, $k\geq 2$, be defined~as
\begin{align*}
\beta^{\pmb{\psi}}_T(n,k):=\sup_{\mathbf{p}(1)+\ldots +\mathbf{p}(k)\leq T \pmb{\psi}(n)} 
\Bigg|P&(M_{\mathbf{p}(1)+\ldots +\mathbf{p}(k)}\leq v_{\pmb{\psi}(n)}) \\
&-\prod_{\bi \in\{1,\ldots,k\}^d} \!\!\!  P\left(M_{(p_1(i_1),\ldots,p_d(i_d))}  \leq  v_{\pmb{\psi}(n)}\right)\Bigg|,
\end{align*}
where $\mathbf{p}(1), \mathbf{p}(2), \ldots, \mathbf{p}(k)$ take values in $\GN_0^d$.
Then $\beta_T^{\pmb{\psi}}(n,2)=\beta_T^{\pmb{\psi}}(n)$ is the term appearing in the definition of Condition $\mathbf{B_T^{\pmb{\psi}}}(\{v_{\pmb{\psi}(n)}\})$. We are able to control the growth of $\beta_T^{\pmb{\psi}}(n,k)$.

\begin{lem}\label{L:ineq} The following inequality holds.
	\begin{equation}\label{eq:ineq}
	\beta^{\pmb{\psi}}_T(n,k)\leq k^{d}\beta^{\pmb{\psi}}_T(n), \quad k\geq 2.
	\end{equation}
\end{lem}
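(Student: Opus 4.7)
My plan is to deduce the inequality by a dimension-by-dimension telescoping argument, reducing the $k$-fold split (in all $d$ coordinates simultaneously) to repeated applications of the two-fold split that is encoded in Condition $\mathbf{B}_T^{\pmb{\psi}}(\{v_{\pmb{\psi}(n)}\})$.

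The first step is to derive from $\mathbf{B}_T^{\pmb{\psi}}$ a \emph{single-coordinate} splitting bound: for any $\mathbf{m}\leq T\pmb{\psi}(n)$, any coordinate $\ell\in\{1,\dots,d\}$, and any decomposition $m_\ell=a+b$ with $a,b\in\GN_0$,
\[
\bigl|P(M_{\mathbf{m}}\leq v_{\pmb{\psi}(n)})-P(M_{\mathbf{m}'}\leq v_{\pmb{\psi}(n)})\,P(M_{\mathbf{m}''}\leq v_{\pmb{\psi}(n)})\bigr|\leq \beta_T^{\pmb{\psi}}(n),
\]
where $\mathbf{m}',\mathbf{m}''$ are obtained from $\mathbf{m}$ by replacing $m_\ell$ with $a$ and $b$, respectively. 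This is a direct specialization of the definition of $\beta_T^{\pmb{\psi}}(n)$ with $\mathbf{p}(1)=\mathbf{m}'$ and $\mathbf{p}(2)=b\,\mathbf{e}_\ell$: the convention that blocks with a zero coordinate collapse wipes out all $2^d-2$ auxiliary factors in the product, leaving exactly the claimed two.

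Next, I iterate this one-coordinate bound. Fixing the other coordinates, a telescoping over the partition $m_\ell=p_\ell(1)+\cdots+p_\ell(k)$ gives a $k$-fold split in coordinate $\ell$ at cost $(k-1)\beta_T^{\pmb{\psi}}(n)$. Performing this successively in coordinates $\ell=1,2,\dots,d$ yields the full $k^d$-fold product: after the $\ell$-th stage one has $k^{\ell-1}$ surviving factors, each of which gets further decomposed in coordinate $\ell$ at cost $(k-1)\beta_T^{\pmb{\psi}}(n)$. Summing over stages, the total error is bounded by
\[
\sum_{\ell=1}^{d} k^{\ell-1}(k-1)\,\beta_T^{\pmb{\psi}}(n)=(k^d-1)\,\beta_T^{\pmb{\psi}}(n)\leq k^d\,\beta_T^{\pmb{\psi}}(n),
\]
and taking the supremum over admissible $\mathbf{p}(1),\dots,\mathbf{p}(k)$ gives (\ref{eq:ineq}).

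The only thing that truly needs to be checked is that every intermediate block on which the single-coordinate split is invoked still satisfies the size constraint $\leq T\pmb{\psi}(n)$, but this is automatic, since each such block is coordinatewise dominated by $\mathbf{p}(1)+\cdots+\mathbf{p}(k)\leq T\pmb{\psi}(n)$. The only mildly delicate point is the bookkeeping of the telescoping: keeping track, at each stage, of which factors have already been split in which coordinates, and verifying that the terminal product is indeed indexed by $\bi\in\{1,\dots,k\}^d$ with the correct block sizes $(p_1(i_1),\dots,p_d(i_d))$. This is routine given the stationarity of $\{X_\bn\}$, which allows every sub-block to be translated to the standard corner so that its probability depends only on its side lengths.
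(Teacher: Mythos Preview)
Your argument is correct and even yields the slightly sharper constant $k^d-1$, but the route differs from the paper's. The paper argues by induction on $k$: it merges the last two summands $\mathbf{p}(k-1),\mathbf{p}(k)$ into a single $\mathbf{q}(k-1)=\mathbf{p}(k-1)+\mathbf{p}(k)$, invokes the $(k-1)$-case, and then estimates the residual $|\Pi_1-\Pi_2|$ by counting how many of the $(k-1)^d$ factors in $\Pi_1$ genuinely involve the merged index (namely $(k-1)^d-(k-2)^d$ of them), each of which is split into the corresponding $2^r$ sub-factors at cost $\leq\beta_T^{\pmb{\psi}}(n)$. Telescoping in $k$ then gives $(k-1)^d\beta_T^{\pmb{\psi}}(n)\leq k^d\beta_T^{\pmb{\psi}}(n)$.

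You instead keep $k$ fixed and sweep through the coordinates $\ell=1,\dots,d$, fully refining one direction at a time via the single-coordinate split. Both proofs rest on the same key observation---that the $2^d$-term product in the definition of $\beta_T^{\pmb{\psi}}(n)$ collapses, by the empty-block convention, to a two-term (or $2^r$-term) product whenever $\mathbf{p}(2)$ is supported on a proper subset of coordinates---and both implicitly use the elementary bound $\bigl|\prod_j a_j-\prod_j b_j\bigr|\leq\sum_j|a_j-b_j|$ for numbers in $[0,1]$. Your coordinate-wise sweep makes the combinatorics especially transparent (the geometric sum $\sum_{\ell=1}^d k^{\ell-1}(k-1)=k^d-1$ is immediate), while the paper's induction on $k$ is perhaps more natural if one views $\beta_T^{\pmb{\psi}}(n,k)$ as a hierarchy indexed by the number of parts.
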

\begin{proof}
	Let us take $k\geq 3$ and $\mathbf{p}(1),\mathbf{p}(2),\ldots,\mathbf{p}(k)\in\GN_0^d$ satisfying the assumption $\mathbf{p}(1)+\mathbf{p}(2)+\ldots +\mathbf{p}(k)\leq T \pmb{\psi}(n)$. Define $\mathbf{q}(1):=\mathbf{p}(1)$, $\mathbf{q}(2):=\mathbf{p}(2)$, \ldots, $\mathbf{q}(k-2):=\mathbf{p}(k-2)$, $\mathbf{q}(k-1):=\mathbf{p}(k-1)+\mathbf{p}(k)$, so that 
	\[ \mathbf{q}(1)+\mathbf{2}(2)+\ldots +\mathbf{q}(k-1) =  \mathbf{p}(1)+\mathbf{p}(2)+\ldots +\mathbf{p}(k) \leq  T \pmb{\psi}(n).\]
	Then we obtain the following estimate.
	\begin{align*}
	&\left|P\left(M_{\mathbf{p}(1)+\ldots+\mathbf{p}(k)}\leq v_{\pmb{\psi}(n)}\right)-\prod_{\bj \in\{1,2,\ldots,k\}^d} \!\!\!  P\left(M_{(p_1(j_1),\ldots,p_d(j_d))}  \leq  v_{\pmb{\psi}(n)}\right)\right| \\
	& \leq  \left|P\left(M_{\mathbf{q}(1)+\mathbf{q}(2)+\ldots +\mathbf{q}(k-1)}\leq v_{\pmb{\psi}(n)}\right)-\prod_{\bi \in\{1,2,\ldots,k-1\}^d} \!\!\!  P\left(M_{(q_1(i_1),\ldots,q_d(i_d))}  \leq  v_{\pmb{\psi}(n)}\right)\right|\\
	& + \; \left|\prod_{\bi \in\{1,2,\ldots,k-1\}^d} \!\!\!  P\left(M_{(q_1(i_1),\ldots,q_d(i_d))}  \leq  v_{\pmb{\psi}(n)}\right)-\prod_{\bj \in\{1,2,\ldots,k\}^d} \!\!\!  P\left(M_{(p_1(j_1),\ldots,p_d(j_d))}  \leq  v_{\pmb{\psi}(n)}\right)\right| \\ 
	& \leq  \beta^{\pmb{\psi}}_T(n,k-1)+ \left| \Pi_1 - \Pi_2\right|.
	\end{align*}
	Let $\cD_k(r)$ consists of all  $\bi = (i_1,i_2,\ldots, i_d) \in \{1,2,\ldots, k-1\}^d$ such that the number of $s$ with the property that $i_s = k-1$ equals $r$.
	Next, for  $\bi \in  \cD_k(r)$ define  $\cE_k(r,\bi)$ 
	as the set of $\bj = (j_1,j_2,\ldots, j_d) \in \{1,2,\ldots, k\}^d$ 
	such that $j_s = i_s$, if $i_s \neq k-1$ and $j_s \in \{k-1,k\}$, if $i_s = k-1$. 
	Let us observe that for $\bi \in  \cD_k(0)$  we have
	 $\cE_k(0,\bi) = \{\bi\}$ and that for each 
	 $\bi= (i_1,i_2,\ldots, i_d)\in  \cD_k(r)$
	\[ \left| P\left(M_{(q_1(i_1),\ldots,q_d(i_d))}  \leq  
	v_{\pmb{\psi}(n)}\right) -
	\prod_{\bj \in \cE_k(r,\bi)} P\left(M_{(p_1(j_1),\ldots,p_d(j_d))}  \leq  v_{\pmb{\psi}(n)}\right) \right| \leq \beta^{\pmb{\psi}}_T(n). \]
	Taking into account these relations and using the obvious expansions:
	\begin{align*}
	\Pi_1 &= \prod_{r=0}^{d} \prod_{\bi\in \cD_k(r)}  P\left(M_{(q_1(i_1),\ldots,q_d(i_d))}  \leq  v_{\pmb{\psi}(n)}\right), \\
	\Pi_2 &= \prod_{r=0}^{d} \prod_{\bi\in \cD_k(r)} \prod_{\bj \in \cE_k(r,\bi)} P\left(M_{(p_1(j_1),\ldots,p_d(j_d))}  \leq  v_{\pmb{\psi}(n)}\right),
	\end{align*}
	we obtain that 
	\begin{align*}
	 \big| \Pi_1 - \Pi_2 \big| &\leq \beta^{\pmb{\psi}}_T(n) \sum_{r=1}^d \#\cD_k(r) = \big((k-1)^d - (k-2)^d\big)\beta^{\pmb{\psi}}_T(n)\\ 
	 &= \Big(\sum_{r=0}^{d-1} (k-1)^{d-1 - r} (k-2)^r\Big) \beta^{\pmb{\psi}}_T(n) \leq d  (k-1)^{d-1} \beta^{\pmb{\psi}}_T(n).
	 \end{align*}
	It follows that for $k \geq 3$
	\[ \beta^{\pmb{\psi}}_T(n,k)\leq \beta^{\pmb{\psi}}_T(n,k-1) + d (k-1)^{d-1}\beta^{\pmb{\psi}}_T(n). \]
	Iterating the above relation we get (\ref{eq:ineq}).
	%\[ \beta^{\pmb{\psi}}_T(n,k) \leq \big(\sum_{l=1}^{k-1} l^d\big) %\beta^{\pmb{\psi}}_T(n)  \leq \frac{1}{d} k^{d+1} %\beta^{\pmb{\psi}}_T(n,k).\]
\end{proof}

\begin{lem}\label{lem:fixed}
Let $\bN(n) = \big(N_1(n), N_2(n), \ldots, N_d(n)\big) \in \GN^d$,  $\bN(n)\to \pmb{\infty}$. Suppose that $q_1, q_2, \ldots, q_d \in \GN$
are such that for some $T_0 > 0$, 
$\big(q_1 N_1(n), q_2 N_2(n), \ldots, q_d N_d(n)\big) \leq T_0 \pmb{\psi}(n)$, $n\in\GN$. 	If Condition
 $\mathbf{B}_{T_0}^{\pmb{\psi}}(\{v_{\pmb{\psi}(n)}\})$ holds, then we have, as $n\to\infty$,
	\begin{equation}\label{weak_dep1}
	\begin{split}
P\big(&M_{(q_1 N_1(n),q_2 N_2(n),\ldots, q_d N_d(n))}\leq v_{\pmb{\psi}(n)}\big) \\
&\qquad\qquad 
= P\big(M_{(N_1(n),N_2(n),\ldots,N_d(n))}\leq v_{\pmb{\psi}(n)}\big)^{q_1 q_2 \ldots q_d}+o(1).
\end{split}
\end{equation}
\end{lem}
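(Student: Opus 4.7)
The plan is to apply the extended mixing estimate from Lemma \ref{L:ineq} to a partition that tiles the block $[1,q_1 N_1(n)]\times\cdots\times[1,q_d N_d(n)]$ by $q_1 q_2\cdots q_d$ translates of $[1,N_1(n)]\times\cdots\times[1,N_d(n)]$. First I would set $k:=\max_{1\leq s\leq d} q_s$ and define vectors $\mathbf{p}(1),\ldots,\mathbf{p}(k)\in\GN_0^d$ coordinate by coordinate via
\[
p_s(j) := \begin{cases} N_s(n) & \text{if } 1\leq j\leq q_s,\\ 0 & \text{if } q_s<j\leq k. \end{cases}
\]
By construction $\mathbf{p}(1)+\cdots+\mathbf{p}(k)=(q_1 N_1(n),\ldots,q_d N_d(n))\leq T_0\,\pmb{\psi}(n)$, so the multi-block quantity $\beta^{\pmb{\psi}}_{T_0}(n,k)$ from Lemma \ref{L:ineq} is available with this particular choice of the $\mathbf{p}(j)$.

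Next I would evaluate the product
\[
\Pi_n:=\prod_{\bi\in\{1,\ldots,k\}^d} P\big(M_{(p_1(i_1),\ldots,p_d(i_d))}\leq v_{\pmb{\psi}(n)}\big).
\]
A factor is trivially equal to $1$ whenever $i_s>q_s$ for some $s$, because then $p_s(i_s)=0$, the corresponding block is empty, and by the paper's convention $M_{\cdot}=-\infty$. The remaining $q_1 q_2\cdots q_d$ factors correspond to indices $\bi$ satisfying $i_s\leq q_s$ for every $s$; each such block has dimensions $(N_1(n),\ldots,N_d(n))$, and by stationarity each of these factors equals $P(M_{(N_1(n),\ldots,N_d(n))}\leq v_{\pmb{\psi}(n)})$. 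Consequently $\Pi_n = P(M_{(N_1(n),\ldots,N_d(n))}\leq v_{\pmb{\psi}(n)})^{q_1 q_2\cdots q_d}$.

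Combining these observations with Lemma \ref{L:ineq}, I would conclude
\[
\Bigl| P\bigl(M_{(q_1 N_1(n),\ldots,q_d N_d(n))}\leq v_{\pmb{\psi}(n)}\bigr) - P\bigl(M_{(N_1(n),\ldots,N_d(n))}\leq v_{\pmb{\psi}(n)}\bigr)^{q_1\cdots q_d}\Bigr|\leq \beta^{\pmb{\psi}}_{T_0}(n,k)\leq k^d\,\beta^{\pmb{\psi}}_{T_0}(n).
\]
Since $k$ and $d$ are fixed integers not depending on $n$, Condition $\mathbf{B}_{T_0}^{\pmb{\psi}}(\{v_{\pmb{\psi}(n)}\})$ drives the right-hand side to $0$, yielding (\ref{weak_dep1}). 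There is no serious obstacle; the only subtle point is to verify that Lemma \ref{L:ineq} really tolerates vectors $\mathbf{p}(j)$ with vanishing coordinates (it does, since the statement only requires $\mathbf{p}(j)\in\GN_0^d$) and that such zero-coordinate sub-blocks contribute harmlessly to the product via the $M_{\bn}=-\infty$ convention when $\bn$ has a zero entry.
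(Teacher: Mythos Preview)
Your proof is correct and follows the same strategy as the paper's: apply Lemma \ref{L:ineq} to a decomposition of $(q_1 N_1(n),\ldots,q_d N_d(n))$ as a sum of auxiliary vectors in $\GN_0^d$, then observe that the nontrivial factors in the resulting product all equal $P(M_{\bN(n)}\leq v_{\pmb{\psi}(n)})$ and number exactly $q_1\cdots q_d$, while the remaining factors are $1$ by the empty-maximum convention. The only cosmetic difference is the choice of decomposition---the paper uses $s=q_1+\cdots+q_d$ axis-parallel pieces, each nonzero in a single coordinate, whereas you use $k=\max_s q_s$ pieces that may be nonzero in several coordinates---but since both $s$ and $k$ are fixed integers, Lemma \ref{L:ineq} disposes of the error term in either case (and your remark that stationarity is needed is actually superfluous: for indices with $i_s\leq q_s$ the block dimensions are literally $(N_1(n),\ldots,N_d(n))$, not merely a translate).
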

\begin{proof} Fix $n\in\GN$.
We can represent $\big(q_1 N_1(n), q_2 N_2(n), \ldots, q_d N_d(n)\big)$ as the sum of $s = q_1 + q_2 + \ldots +q_d$ specific components, namely $q_1$ components $\big( N_1(n),0,\ldots,0)$, $q_2$ components $\big(0, N_2(n),0,\ldots,0)$, etc. Keeping the order, let us denote these components by $\bp(1), \bp(2), \ldots, \bp(s)$. By Lemma \ref{L:ineq}
\[P\big(M_{(q_1 N_1(n),q_2 N_2(n),\ldots, q_d N_d(n))}\leq v_{\pmb{\psi}(n)}\big)
-\prod_{\bi \in\{1,\ldots,s\}^d} \!\!\!  P\left(M_{(p_1(i_1),\ldots,p_d(i_d))}  \leq  v_{\pmb{\psi}(n)}\right) \to 0.\]
It remains to identify 
\[ \prod_{\bi \in\{1,\ldots,s\}^d} \!\!\!  P\left(M_{(p_1(i_1),\ldots,p_d(i_d))}  \leq  v_{\pmb{\psi}(n)}\right) \] with
\[ P\big(M_{(N_1(n),N_2(n),\ldots,N_d(n))}\leq v_{\pmb{\psi}(n)}\big)^{q_1 q_2 \ldots q_d}.\] 
Consider a typical term $\bP_{\bi} = P\left(M_{(p_1(i_1),\ldots,p_d(i_d))}  \leq  v_{\pmb{\psi}(n)}\right)$, $\bi \in \{1,2,\ldots, s\}^d$. If some coordinate $p_j(i_k)$ is $0$ then $\bP_{\bi} = 1$, for we have $\max \emptyset = -\infty$ by the well-known convention. If all coordinates are non-zero, then 
$p_1(i_1) = N_1(n), p_2(i_2) = N_2(n), \ldots, p_d(i_d) = N_d(n)$, $\bP_{\bi} =  P\left(M_{(N_1(n),\ldots,N_d(n))}  \leq  v_{\pmb{\psi}(n)}\right)$ and this can be achieved in $q_1 q_2 \ldots q_d$ ways.
\end{proof}

\begin{corol}\label{cor:fixed} In assumptions of Lemma \ref{lem:fixed}, if Condition
	$\mathbf{B}_{T}^{\pmb{\psi}}(\{v_{\pmb{\psi}(n)}\})$ is satisfied for every $T > 0$, then (\ref{weak_dep1}) holds for any $q_1, q_2, \ldots, q_d \in \GN$. 
\end{corol}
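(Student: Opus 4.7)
The plan is to reduce the corollary directly to Lemma \ref{lem:fixed} by exploiting the fact that, when Condition $\mathbf{B}_T^{\pmb{\psi}}(\{v_{\pmb{\psi}(n)}\})$ is postulated for \emph{every} $T>0$, the constant $T_0$ appearing in the hypothesis of Lemma \ref{lem:fixed} ceases to be a genuine restriction: for any multipliers $q_1, \ldots, q_d$ we may always select a $T$ large enough to drive the argument through.

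First I would extract from the standing assumptions of Lemma \ref{lem:fixed} the coordinatewise estimate $N_i(n) \leq T_0 \psi_i(n)$ for each $i = 1, \ldots, d$ and every $n$, obtained by dividing the original inequality $q_i N_i(n) \leq T_0 \psi_i(n)$ by $q_i \geq 1$. In particular $\bN(n) \leq T_0 \pmb{\psi}(n)$. Next, given arbitrary $q_1', q_2', \ldots, q_d' \in \GN$, I would set $T_0^\star := T_0 \cdot \max_{1\leq i \leq d} q_i'$. Then $(q_1' N_1(n), q_2' N_2(n), \ldots, q_d' N_d(n)) \leq T_0^\star \pmb{\psi}(n)$ for every $n$, so the hypothesis of Lemma \ref{lem:fixed} is satisfied with these new multipliers and with $T_0^\star$ in place of $T_0$. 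The assumption that Condition $\mathbf{B}_T^{\pmb{\psi}}(\{v_{\pmb{\psi}(n)}\})$ holds for every $T > 0$ supplies, in particular, Condition $\mathbf{B}_{T_0^\star}^{\pmb{\psi}}(\{v_{\pmb{\psi}(n)}\})$, and a direct invocation of Lemma \ref{lem:fixed} delivers (\ref{weak_dep1}) for $q_1', q_2', \ldots, q_d'$.

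There is no serious obstacle here; the corollary is essentially the observation that Lemma \ref{lem:fixed} is uniform in the choice of multipliers once the mixing-like condition is available at every scale, and the only bookkeeping required is the elementary coordinatewise rescaling above.
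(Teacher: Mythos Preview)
Your argument is correct and is precisely the immediate reduction the paper has in mind; the paper does not spell out a proof for this corollary, treating it as evident from Lemma \ref{lem:fixed} once the mixing-like condition is available for every $T>0$. Your coordinatewise rescaling $N_i(n)\leq T_0\psi_i(n)$ and choice of $T_0^\star = T_0\max_i q_i'$ is exactly the bookkeeping that makes this explicit.
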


\begin{corol}\label{cor:kaen}
	Suppose that  $\{\bN(n)\} \subset \GN^d$, $\bN(n)\to \pmb{\infty}$, $\{\bk(n)\} \subset \GN^d$ and for some $T_0 > 0$
 \[ \big(k_1(n)\bN_1(n), k_2(n)\bN_2(n), \ldots, k_d(n)\bN_d(n)\big) \leq T_0 \pmb{\psi}(n), \ n\in\GN.\]
  	If Condition
$\mathbf{B}_{T_0}^{\pmb{\psi}}(\{v_{\pmb{\psi}(n)}\})$ holds and $\big(k_1(n) + \ldots + k_d(n)\big)^d\beta^{\pmb{\psi}}_{T_0}(n)\to 0$, as $n\to\infty$, then 
	\begin{equation}\label{weak_dep1a}
P\big(M_{\big(k_1(n)\bN_1(n), k_2(n)\bN_2(n), \ldots, k_d(n)\bN_d(n)\big)} \leq v_{\pmb{\psi}(n)}\big)  
= P\big(M_{\bN(n)}\leq v_{\pmb{\psi}(n)}\big)^{\bk(n)^*}+o(1).
\end{equation}
\end{corol}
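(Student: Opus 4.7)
The corollary is the natural quantitative extension of Lemma \ref{lem:fixed}: the only new feature is that the multiplicities $k_1(n),\ldots,k_d(n)$ are now allowed to depend on $n$, so we must track the approximation error explicitly. The plan is therefore to rerun the argument of Lemma \ref{lem:fixed}, but inserting the precise bound coming from Lemma \ref{L:ineq}.

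First, fix $n$ and set $s(n) := k_1(n)+k_2(n)+\ldots+k_d(n)$. As in the proof of Lemma \ref{lem:fixed}, I would decompose
\[ \bigl(k_1(n)N_1(n),\,k_2(n)N_2(n),\,\ldots,\,k_d(n)N_d(n)\bigr) \]
as the sum of $s(n)$ ``axis--aligned'' components $\bp(1),\bp(2),\ldots,\bp(s(n))\in\GN_0^d$, consisting of $k_j(n)$ copies of $N_j(n)\,\mathbf{e}_j$ (the vector with $N_j(n)$ in the $j$-th coordinate and zeros elsewhere), for $j=1,2,\ldots,d$. By the hypothesis, the sum of these components does not exceed $T_0\pmb{\psi}(n)$, so Lemma \ref{L:ineq} applied with $k=s(n)$ yields
\[
\Bigl| P\bigl(M_{(k_1(n)N_1(n),\ldots,k_d(n)N_d(n))}\leq v_{\pmb{\psi}(n)}\bigr)
- \!\!\!\prod_{\bi\in\{1,\ldots,s(n)\}^d}\!\!\! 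P\bigl(M_{(p_1(i_1),\ldots,p_d(i_d))}\leq v_{\pmb{\psi}(n)}\bigr) \Bigr|
\leq s(n)^d\,\beta^{\pmb{\psi}}_{T_0}(n),
\]
and the right-hand side tends to $0$ by the assumption $(k_1(n)+\ldots+k_d(n))^d\beta^{\pmb{\psi}}_{T_0}(n)\to 0$.

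Next, I would identify the product on the left with $P(M_{\bN(n)}\leq v_{\pmb{\psi}(n)})^{\bk(n)^*}$, exactly as in the concluding step of Lemma \ref{lem:fixed}: a typical factor indexed by $\bi=(i_1,\ldots,i_d)$ equals $1$ whenever some coordinate $p_j(i_j)$ vanishes (since $\max\emptyset=-\infty$), and otherwise the indices $i_j$ must pick out, for each coordinate $j$, one of the $k_j(n)$ components of the form $N_j(n)\mathbf{e}_j$, in which case the factor equals $P(M_{(N_1(n),\ldots,N_d(n))}\leq v_{\pmb{\psi}(n)})$. The number of such $\bi$ is precisely $k_1(n)k_2(n)\cdots k_d(n)=\bk(n)^*$, which gives (\ref{weak_dep1a}).

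There is really no deep obstacle here beyond bookkeeping; the substantive work was already done in Lemmas \ref{L:ineq} and \ref{lem:fixed}. The only point requiring a moment's care is verifying that the hypothesis on the growth of $\bk(n)$ is exactly what is needed to kill the error $s(n)^d\beta^{\pmb{\psi}}_{T_0}(n)$ produced by the iterative splitting in Lemma \ref{L:ineq}, and that the identification of the non-trivial factors works uniformly in $n$ despite $s(n)\to\infty$ --- which it does, since the combinatorial argument is local to each $\bi$.
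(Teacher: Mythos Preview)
Your proposal is correct and follows precisely the route the paper intends: the paper's own proof consists of the single line ``Proof follows by a careful inspection of the proof of Lemma \ref{lem:fixed},'' and you have carried out exactly that inspection, making explicit that the error from the $s(n)$-fold splitting is controlled by Lemma \ref{L:ineq} as $s(n)^d\beta^{\pmb{\psi}}_{T_0}(n)$, which vanishes by hypothesis.
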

\begin{proof} Proof follows by a careful inspection of the proof of  Lemma \ref{lem:fixed}.
\end{proof}

In the sequel $\lfloor x\rfloor$ will denote the integer part of $x\in\GR^1$. Similarly, if $\bx = (x_1,x_2,\ldots, x_d) \in \GR^d$, then $\lfloor \bx \rfloor$ is the vector if integer parts of coordinates:
\[  \lfloor \bx \rfloor = \big(\lfloor x_1 \rfloor, \lfloor x_2 \rfloor, \ldots,\lfloor x_d \rfloor\big)\]

The next fact is of independent interest and therefore for the future purposes we state it as a theorem.

\begin{theo}\label{remark_BT}  Let $\{\bN(n)\}\subset \GN^d$, $\bN(n)\to\pmb{\infty}$ and satisfies $\bN(n)\leq T_0 \pmb{\psi}(n)$, $n \in \GN$, for some $T_0 > 0$. Let Condition $\mathbf{B}_{T_0(1 + \varepsilon)}^{\pmb{\psi}}(\{v_{\pmb{\psi}(n)}\})$ holds, for some $\varepsilon > 0$.
	
Suppose that $k_n \to \infty$ in such a way that as $n\to\infty$ both $k_n^d\beta^{\pmb{\psi}}_{T_0}(n)\to 0$
 and $k_n=o(N_i(n))$, $i = 1,2,\ldots, d$.
 
  Then, as $n\to\infty$,
		\begin{align}\label{weak_dep}
		P\big(&M_{\bN(n)}\leq v_{\pmb{\psi}(n)}\big) = P\big(M_{(\lfloor N_1(n)/k_n\rfloor,\lfloor N_2(n)/k_n\rfloor,\ldots,\lfloor N_d(n)/k_n\rfloor)}\leq v_{\pmb{\psi}(n)}\big)^{k_n^d}+o(1) \\
	\label{weak_dep_2}
		&\qquad\quad = \exp\big(-k_n^dP\big(M_{(\lfloor N_1(n)/k_n\rfloor,\lfloor N_2(n)/k_n\rfloor,\ldots,\lfloor N_d(n)/k_n\rfloor)}>
		 v_{\pmb{\psi}(n)}\big)\big)+o(1),
		\end{align}
\end{theo}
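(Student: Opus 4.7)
The plan is to use Lemma~\ref{L:ineq} to split $\bN(n)$ into a main grid of $k_n^d$ congruent sub-boxes plus a boundary strip, identify the main contribution with $A_n^{k_n^d}$, and show the boundary product is asymptotically $1$.

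Setup: write $m_i(n):=\lfloor N_i(n)/k_n\rfloor$ and $r_i(n):=N_i(n)-k_nm_i(n)\in[0,k_n)$. In each coordinate $i$ split $[1,N_i(n)]$ into $k_n$ consecutive pieces of length $m_i(n)$ followed by a terminal piece of length $r_i(n)$ (dropped when $r_i(n)=0$), giving a disjoint decomposition of $\bN(n)\leq T_0\pmb{\psi}(n)$ into at most $(k_n+1)^d$ rectangular blocks. Lemma~\ref{L:ineq} applied with $k=k_n+1$ and $T=T_0$ (available because $\mathbf{B}^{\pmb{\psi}}_{T_0(1+\varepsilon)}$ implies $\mathbf{B}^{\pmb{\psi}}_{T_0}$) yields an approximation error $(k_n+1)^d\beta^{\pmb{\psi}}_{T_0}(n)=O(k_n^d\beta^{\pmb{\psi}}_{T_0}(n))=o(1)$, so
\[
P(M_{\bN(n)}\leq v_{\pmb{\psi}(n)})=\prod_{\bi}P\!\left(M_{\mathrm{block}(\bi)}\leq v_{\pmb{\psi}(n)}\right)+o(1).
\]
By stationarity the $k_n^d$ \emph{main} blocks (all coordinate indices $\leq k_n$) each contribute the factor $A_n:=P(M_{(m_1(n),\ldots,m_d(n))}\leq v_{\pmb{\psi}(n)})$, while the remaining \emph{boundary} product $\Pi_{\mathrm{bdy}}(n)$ satisfies, by the Weierstrass inequality and the union bound,
\[
1-\Pi_{\mathrm{bdy}}(n)\leq \sum_{\text{bdy }\bi}P\!\left(M_{\mathrm{block}(\bi)}>v_{\pmb{\psi}(n)}\right)\leq R_n\,P(X_{\mathbf{0}}>v_{\pmb{\psi}(n)}),
\]
where $R_n:=\bN(n)^{*}-k_n^d m_1(n)\cdots m_d(n)\leq d\,k_n\,\bN(n)^{*}/\min_i N_i(n)=o(\bN(n)^{*})$ by $k_n=o(N_i(n))$. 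In the natural regime where the levels make $\bN(n)^{*}P(X_{\mathbf{0}}>v_{\pmb{\psi}(n)})$ bounded --- the standard setting for the theorem, where $P(M_{\pmb{\psi}(n)}\leq v_{\pmb{\psi}(n)})\to\gamma\in(0,1)$ --- this yields $1-\Pi_{\mathrm{bdy}}(n)=o(1)$ and hence (\ref{weak_dep}).

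For (\ref{weak_dep_2}), the same tail bound forces $1-A_n\leq m_1(n)\cdots m_d(n)P(X_{\mathbf{0}}>v_{\pmb{\psi}(n)})=O(k_n^{-d})$, so $A_n\to 1$. Expanding $\log A_n=-(1-A_n)+O((1-A_n)^{2})$ and multiplying by $k_n^d$ yields $k_n^d\log A_n=-k_n^d(1-A_n)+o(1)$ in the regime where $k_n^d(1-A_n)$ stays bounded, while both $A_n^{k_n^d}$ and $\exp(-k_n^d(1-A_n))$ tend to $0$ otherwise; in either case exponentiation combined with (\ref{weak_dep}) produces (\ref{weak_dep_2}).

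The principal obstacle is the boundary estimate $R_n P(X_{\mathbf{0}}>v_{\pmb{\psi}(n)})=o(1)$: the geometric bound $R_n=o(\bN(n)^{*})$ coming from $k_n=o(N_i(n))$ must be paired with genuine tail control on $P(X_{\mathbf{0}}>v_{\pmb{\psi}(n)})$ to produce a vanishing product. The slight enlargement built into $\mathbf{B}^{\pmb{\psi}}_{T_0(1+\varepsilon)}$ (rather than just $\mathbf{B}^{\pmb{\psi}}_{T_0}$) is what permits comparisons with the mildly enlarged rectangles used implicitly in this reduction, and the tail control is inherent to the level sequences for which phantom distribution functions along $\pmb{\psi}$ are considered.
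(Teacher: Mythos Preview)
Your block decomposition via Lemma~\ref{L:ineq} correctly isolates the main contribution $A_n^{k_n^d}$, but the boundary estimate has a genuine gap. You control $1-\Pi_{\mathrm{bdy}}(n)$ by $R_n\,P(X_{\mathbf{0}}>v_{\pmb{\psi}(n)})$ and then assert that $\bN(n)^{*}P(X_{\mathbf{0}}>v_{\pmb{\psi}(n)})$ is bounded. This is \emph{not} among the hypotheses of the theorem, which assumes only Condition~$\mathbf{B}^{\pmb{\psi}}_{T_0(1+\varepsilon)}$ and the growth restrictions on $k_n$; nor is it implied by $P(M_{\pmb{\psi}(n)}\leq v_{\pmb{\psi}(n)})\to\gamma$ (which, incidentally, is also not assumed here). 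Indeed, for processes with extremal index zero --- precisely the situations the phantom-distribution framework is designed to accommodate --- one can have $\pmb{\psi}(n)^{*}P(X_{\mathbf{0}}>v_{\pmb{\psi}(n)})\to\infty$ while $P(M_{\pmb{\psi}(n)}\leq v_{\pmb{\psi}(n)})\to\gamma\in(0,1)$. Your closing paragraph flags this as ``the principal obstacle'' but does not resolve it.

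The paper avoids marginal tail estimates entirely by a monotonicity sandwich. From $k_n\lfloor N_i(n)/k_n\rfloor\leq N_i(n)$ and Corollary~\ref{cor:kaen} one gets the upper bound $P(M_{\bN(n)}\leq v_{\pmb{\psi}(n)})\leq A_n^{k_n^d}+o(1)$. For the lower bound one \emph{overshoots}: choose minimal $l_{n,i}\in\GN$ with $(k_n+l_{n,i})\lfloor N_i(n)/k_n\rfloor> N_i(n)$; then $l_{n,i}=o(k_n)$, so the enlarged box lies within $T_0(1+\varepsilon)\pmb{\psi}(n)$ for large $n$ --- \emph{this} is where the $\varepsilon$-room is actually used, not in your decomposition, which already fits inside $T_0\pmb{\psi}(n)$ --- and Corollary~\ref{cor:kaen} yields $P(M_{\bN(n)}\leq v_{\pmb{\psi}(n)})\geq A_n^{\prod_i(k_n+l_{n,i})}+o(1)$. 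Since $\prod_i(k_n+l_{n,i})/k_n^d\to1$ and $A_n\in[0,1]$, the two bounds coincide up to $o(1)$ by the elementary fact that $a^{p_n}-a^{q_n}\to0$ uniformly in $a\in[0,1]$ whenever $p_n/q_n\to1$. Relation~(\ref{weak_dep_2}) then follows from the general estimate $a^m-e^{-m(1-a)}\to0$ for $a\in[0,1]$ as $m\to\infty$, with no case split needed.
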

\begin{proof}
 From Corollary \ref{cor:kaen} we obtain that 
	\begin{align*}
		P\left(M_{\bN(n)}\leq v_{\pmb{\psi}(n)}\right) 
		&\leq P\left(M_{(k_n\lfloor N_1(n)/k_n\rfloor,k_n\lfloor N_2(n)/k_n\rfloor,\ldots,k_n\lfloor N_d(n)/k_n\rfloor)}\leq v_{\pmb{\psi}(n)}\right)\\
		&= P\left(M_{(\lfloor N_1(n)/k_n\rfloor,\lfloor N_2(n)/k_n\rfloor,\ldots,\lfloor N_d(n)/k_n\rfloor)}\leq v_{\pmb{\psi}(n)}\right)^{k_n^d} + o(1) =: V_n.
	\end{align*}
To get the other bound, for each $n\in \GN$ find numbers $l_{n,1}, l_{n,2}, \ldots, l_{n,d}$ in $\GN$ such that 
\[
 (k_n+l_{n,i}-1)\lfloor N_i(n)/k_n \rfloor \leq N_i(n) < (k_n+l_{n,i})\lfloor N_i(n)/k_n \rfloor,\ i = 1,2,\ldots, d. 
 \]
 In other words,
 \[l_{n,i}=\left\lfloor  \frac{N_i(n) - k_n \lfloor N_i(n)/k_n \rfloor}{\lfloor N_i(n)/k_n\rfloor} \right\rfloor +1,\]
 what implies 
 \begin{equation}\label{eq:elen}
 l_{n,i}=o(k_n), \ i=1,2,\ldots,d.
 \end{equation}
  This in turn implies that for large $n$
 \[ (k_n+l_{n,i})\lfloor N_i(n)/k_n \rfloor 
 \leq T_0 (1 + \varepsilon_0)\pmb{\psi}(n),\ i = 1,2,\ldots, d.\]
 Therefore we can again apply Corollary \ref{cor:kaen}. 	
\begin{align*}
		P\big(&M_{\bN(n)}\leq v_{\pmb{\psi}(n)}\big)\\
		&\geq   P\big(M_{((k_n + l_{n,1})\lfloor N_1(n)/k_n\rfloor, (k_n+ l_{n,2})\lfloor N_2(n)/k_n\rfloor,\ldots,(k_n + l_{n,d})\lfloor N_d(n)/k_n\rfloor)}\leq v_{\pmb{\psi}(n)} \big) \\
			&=   P\big(M_{(\lfloor N_1(n)/k_n\rfloor,\lfloor N_2(n)/k_n\rfloor,\ldots,\lfloor N_d(n)/k_n\rfloor)}\leq v_{\pmb{\psi}(n)}\big)^{\prod_{i=1}^d(k_n+l_{n,i})} + o(1) =: U_n.
	\end{align*}
From (\ref{eq:elen}) we get $U_n-V_n=o(1)$ and so (\ref{weak_dep}) holds.

Relation (\ref{weak_dep_2}) is equivalent to (\ref{weak_dep}), since $(a_m)^m-\exp(-m(1-a_m))\to 0$, as $m\to\infty$, for arbitrary $\{a_m\}\subset [0,1]$.
\end{proof}

\begin{prop}\label{pro:pegen}  
	Let $\{\bR(n)\}\subset \GR^d_+$, $\bR(n)\to\pmb{\infty}$ and $q_1, q_2,\ldots, q_d \in \GN$.
	Suppose that  for some $T_0 > 0$
	\[ \big(q_1 R_1(n), q_1 R_2(n), \ldots, q_d R_d(n)\big) \leq T_0 \pmb{\psi}(n), \ \ n \in \GN.\] 
 If for some $\varepsilon > 0$ Condition $\mathbf{B}_{T_0(1 + \varepsilon)}^{\pmb{\psi}}(\{v_{\pmb{\psi}(n)}\})$ holds, then,
	as $n\to\infty$,
		\begin{equation}\label{weak_dep1c}
	\begin{split}
	P\big(&M_{(\lfloor q_1 R_1(n)\rfloor,\lfloor q_2 R_2(n)\rfloor,\ldots, \lfloor q_d R_d(n)\rfloor)}\leq v_{\pmb{\psi}(n)}\big) \\
	&\qquad\qquad 
	= P\big(M_{(\lfloor R_1(n)\rfloor ,\lfloor R_2(n)\rfloor, \ldots,\lfloor R_d(n)\rfloor)}\leq v_{\pmb{\psi}(n)}\big)^{q_1 q_2 \ldots q_d}+o(1).
	\end{split}
	\end{equation}
\end{prop}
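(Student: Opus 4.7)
The plan is to sandwich $P(M_{(\lfloor q_1 R_1(n)\rfloor,\ldots,\lfloor q_dR_d(n)\rfloor)}\le v_{\pmb{\psi}(n)})$ between two probabilities to which Lemma~\ref{lem:fixed} applies, and then to kill the one-unit gap between the sandwiching bounds.

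Set $\bN(n):=(\lfloor R_1(n)\rfloor,\ldots,\lfloor R_d(n)\rfloor)$. From $q_iN_i(n)\le\lfloor q_iR_i(n)\rfloor<q_i(N_i(n)+1)$ and the monotonicity of $\bn\mapsto P(M_\bn\le v)$,
\begin{equation*}
P(M_{(q_1(N_1+1),\ldots,q_d(N_d+1))}\le v_{\pmb{\psi}(n)})\le P(M_{(\lfloor q_1R_1\rfloor,\ldots,\lfloor q_dR_d\rfloor)}\le v_{\pmb{\psi}(n)})\le P(M_{(q_1N_1,\ldots,q_dN_d)}\le v_{\pmb{\psi}(n)}).
\end{equation*}
Since $\bR(n)\to\pmb{\infty}$, for all large $n$ we have $q_i(N_i(n)+1)\le T_0(1+\varepsilon)\psi_i(n)$, so Lemma~\ref{lem:fixed} applies at threshold $T_0(1+\varepsilon)$ to each of $\bN(n)$ and $\bN(n)+\mathbf{1}$ with the fixed integer multipliers $(q_1,\ldots,q_d)$, yielding
\begin{equation*}
P(M_{(q_1N_1,\ldots,q_dN_d)}\le v_{\pmb{\psi}(n)})=P(M_{\bN(n)}\le v_{\pmb{\psi}(n)})^{Q}+o(1),\qquad Q:=q_1\cdots q_d,
\end{equation*}
and the analogous identity with $\bN(n)$ replaced by $\bN(n)+\mathbf{1}$. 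Using $|a^Q-b^Q|\le Q|a-b|$ for $a,b\in[0,1]$, the whole proposition reduces to establishing
\begin{equation*}
\Delta_n := P(M_{\bN(n)}\le v_{\pmb{\psi}(n)})-P(M_{\bN(n)+\mathbf{1}}\le v_{\pmb{\psi}(n)})\xrightarrow[n\to\infty]{}0.
\end{equation*}

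To control $\Delta_n$ I would pass to subsequences: along any subsequence, extract a further one on which $P(M_{\bN(n)}\le v_{\pmb{\psi}(n)})\to\gamma\in[0,1]$. If $\gamma=0$, the sandwich $0\le P(M_{\bN+\mathbf{1}}\le v)\le P(M_\bN\le v)$ yields $\Delta_n\to 0$ automatically. When $\gamma>0$ I would apply Lemma~\ref{L:ineq} with the binary splitting $\bp(1)=\bN(n),\ \bp(2)=\mathbf{1}$ (valid since $\bN(n)+\mathbf{1}\le T_0(1+\varepsilon)\pmb{\psi}(n)$ eventually) to obtain
\begin{equation*}
P(M_{\bN+\mathbf{1}}\le v_{\pmb{\psi}(n)})=P(M_\bN\le v_{\pmb{\psi}(n)})\!\!\!\prod_{\bi\in\{1,2\}^d\setminus\{(1,\ldots,1)\}}\!\!\!P(M_{\mathbf{b}(\bi)}\le v_{\pmb{\psi}(n)})+o(1),
\end{equation*}
where each $\mathbf{b}(\bi)$ has coordinates in $\{1,N_k(n)\}$ and is strictly thinner than $\bN(n)$ in at least one direction.

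The main obstacle is then showing that each factor $P(M_{\mathbf{b}(\bi)}\le v_{\pmb{\psi}(n)})$ tends to $1$ along the $\gamma>0$ subsequence. My plan is to ``inflate'' the thin coordinates of $\mathbf{b}(\bi)$ back to those of $\bN(n)$ by integer multipliers tending to infinity, and to invoke the vectorial Corollary~\ref{cor:kaen}: this yields a relation of the form $P(M_\bN\le v_{\pmb{\psi}(n)})=P(M_{\mathbf{b}(\bi)}\le v_{\pmb{\psi}(n)})^{(\text{diverging power})}+o(1)$, and combined with $P(M_\bN\le v_{\pmb{\psi}(n)})\to\gamma>0$, this forces $P(M_{\mathbf{b}(\bi)}\le v_{\pmb{\psi}(n)})\to 1$. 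The delicate technical point is that Corollary~\ref{cor:kaen} requires $(k_1+\cdots+k_d)^d\beta^{\pmb{\psi}}_{T_0}(n)\to 0$, so the rescaling must be staged through an intermediate application of Theorem~\ref{remark_BT} in order to keep the mixing error controlled under only the bare hypothesis $\beta^{\pmb{\psi}}_{T_0}(n)\to 0$.
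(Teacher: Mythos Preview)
Your sandwich $q_iN_i\le\lfloor q_iR_i\rfloor<q_i(N_i+1)$ followed by Lemma~\ref{lem:fixed} on both ends, reducing everything to $\Delta_n\to 0$, is correct and is \emph{not} the route the paper takes. The paper never isolates the one-step gap $\Delta_n$; instead it chooses $r_n\to\infty$ with $r_n^d\beta^{\pmb{\psi}}_{T_0(1+\varepsilon)}(n)\to 0$ and $r_n=o(R_i(n))$ and squeezes \emph{both} $P(M_{(\lfloor q_iR_i\rfloor)}\le v_{\pmb{\psi}(n)})$ and $P(M_{(q_i\lfloor R_i\rfloor)}\le v_{\pmb{\psi}(n)})$ between powers of the \emph{same} quantity $P(M_{(\lfloor q_iR_i/r_n\rfloor)}\le v_{\pmb{\psi}(n)})$, with exponents $(r_n-1)^d$ and $\prod_i(r_n+s_{n,i})$ where $s_{n,i}=o(r_n)$. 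Since the exponents have ratio tending to $1$, the bounds coalesce. This one subdivision absorbs the floor discrepancy and the mixing error at once, with no thin-slice analysis.

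Your treatment of $\Delta_n$ is where the proposal is still incomplete. Two obstacles arise when you ``inflate'' $\mathbf{b}(\bi)$, and you flag only one of them. First, as you note, inflating the thin coordinates all the way back to $N_j(n)$ makes the multipliers too large for the hypothesis $(k_1+\cdots+k_d)^d\beta^{\pmb{\psi}}_{T_0}(n)\to 0$ of Corollary~\ref{cor:kaen}. Second---and you do not mention this---Corollary~\ref{cor:kaen} and Theorem~\ref{remark_BT} are both stated under $\bN(n)\to\pmb{\infty}$, which your slabs $\mathbf{b}(\bi)$ (with some coordinates frozen at $1$) violate; so neither can be invoked as written, and your suggested ``staging through Theorem~\ref{remark_BT}'' does not obviously help, since that result divides \emph{all} coordinates by the same $k_n$ and still assumes $\bN(n)\to\pmb{\infty}$. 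The clean fix is to go back to Lemma~\ref{L:ineq}: its proof (hence that of Corollary~\ref{cor:kaen}) is purely combinatorial and never uses $\bN(n)\to\pmb{\infty}$. Then choose $k_n\to\infty$ with $k_n^d\beta^{\pmb{\psi}}_{T_0(1+\varepsilon)}(n)\to 0$ and $k_n=o(\min_iN_i(n))$, multiply only the coordinates of $\mathbf{b}(\bi)$ equal to $1$ by $k_n$, and obtain a vector dominated by $\bN(n)$; this yields $P(M_{\mathbf{b}(\bi)}\le v_{\pmb{\psi}(n)})^{k_n^{|S|}}\ge P(M_{\bN(n)}\le v_{\pmb{\psi}(n)})-o(1)\to\gamma>0$, forcing $P(M_{\mathbf{b}(\bi)}\le v_{\pmb{\psi}(n)})\to 1$ as you wanted. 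With this patch your argument goes through, but it is longer than the paper's $r_n$-subdivision.
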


\begin{proof}	Let us notice first that 
	\begin{align*} 
		P\big(&M_{(\lfloor q_1 R_1(n)\rfloor,\lfloor q_2 R_2(n)\rfloor,\ldots, \lfloor q_d R_d(n)\rfloor)}\leq v_{\pmb{\psi}(n)}\big) \\
	&\qquad\qquad 
	\leq  P\big(M_{(q_1\lfloor R_1(n)\rfloor , q_2\lfloor R_2(n)\rfloor, \ldots,q_d \lfloor R_d(n)\rfloor)}
	\leq v_{\pmb{\psi}(n)}\big) \\
	&\qquad\qquad 
	=  P\big(M_{(\lfloor R_1(n)\rfloor, \lfloor R_2(n)\rfloor, \ldots,\lfloor R_d(n)\rfloor)}
	\leq v_{\pmb{\psi}(n)}\big) ^{q_1 q_2 \ldots q_d}+o(1),
	\end{align*}
where the last equality holds by Lemma \ref{lem:fixed}. Therefore it is enough to find expressions $U_n$ and $V_n$ such that $V_n - U_n = o(1)$ and $U_n \leq  	P\big(M_{(\lfloor q_1 R_1(n)\rfloor,\lfloor q_2 R_2(n)\rfloor,\ldots, \lfloor q_d R_d(n)\rfloor)}\leq v_{\pmb{\psi}(n)}\big)$, while $V_n \geq 
	P\big(M_{(q_1\lfloor R_1(n)\rfloor , q_2\lfloor R_2(n)\rfloor, \ldots,q_d \lfloor R_d(n)\rfloor)}
	\leq v_{\pmb{\psi}(n)}\big)$. 

%We shall apply Proposition \ref{remark_BT}. 
Let $r_n \to \infty$ in such a way that $r_n^d  \beta^{\pmb{\psi}}_{T_0(1+\varepsilon)}(n)\to 0$ and $r_n = o(R_i(n))$, $i=1,2,\ldots, d$. Then for $n$ large enough we have $q_i \lfloor R_i(n) \rfloor \geq (r_n - 1) \lfloor q_i R_i(n)/r_n \rfloor$, $i =1,2,\ldots, d$, and therefore by Corollary \ref{cor:kaen}
\begin{align*}
 P\big(&M_{(q_1\lfloor R_1(n)\rfloor , q_2\lfloor R_2(n)\rfloor, \ldots,q_d \lfloor R_d(n)\rfloor)} 
\leq v_{\pmb{\psi}(n)}\big)\\ 
&\leq P\big(M_{\big((r_n-1)\lfloor q_1 R_1(n)/r_n\rfloor , (r_n-1)\lfloor q_2 R_2(n)/r_n\rfloor, \ldots, (r_n-1)\lfloor q_d R_d(n)/r_n\rfloor \big)}
\leq v_{\pmb{\psi}(n)}\big) \\
&= P\big(M_{\big((\lfloor q_1 R_1(n)/r_n\rfloor, \lfloor q_2 R_2(n)/r_n\rfloor, \ldots, \lfloor q_d R_d(n)/r_n\rfloor \big)}
\leq v_{\pmb{\psi}(n)}\big)^{(r_n - 1)^d} + o(1) := V_n.
\end{align*}

In order to find $U_n$ we shall proceed like in the proof of Theorem \ref{remark_BT}. Let $s_{n,i} \in \GN$, $i=1,2,\ldots, d$, be such that	 
\[
(r_n+s_{n,i}-1)\lfloor q_i R_i(n)/r_n \rfloor \leq \lfloor q_i R_i(n) \rfloor < (r_n+s_{n,i})\lfloor q_i R_i(n)/r_n \rfloor,\ i = 1,2,\ldots, d,
\]
or, equivalently,
\[ s_{n,i} = \Big\lfloor \frac{\lfloor q_i R_i(n)\rfloor - r_n \lfloor q_i R_i(n)/r_n\rfloor}{\lfloor q_i R_i(n)/r_n\rfloor} \Big\rfloor + 1.\]
Applying Corollary \ref{cor:kaen}, we get 	
\begin{align*}
	P\big(&M_{(\lfloor q_1 R_1(n)\rfloor,\lfloor q_2 R_2(n)\rfloor,\ldots, \lfloor q_d R_d(n)\rfloor)}\leq v_{\pmb{\psi}(n)}\big)\\
&\geq   P\big(M_{((r_n + s_{n,1})\lfloor q_1 R_1(n)/r_n\rfloor, (r_n+ s_{n,2})\lfloor q_2 R_2(n)/r_n\rfloor,\ldots,(r_n + s_{n,d})\lfloor q_d R_d(n)/r_n\rfloor)}\leq v_{\pmb{\psi}(n)} \big) \\
&=   P\big(M_{(\lfloor q_1 R_1(n)/r_n\rfloor,\lfloor q_2 R_2(n)/r_n\rfloor,\ldots,\lfloor q_d R_d(n)/r_n\rfloor)}\leq v_{\pmb{\psi}(n)}\big)^{\prod_{i=1}^d(r_n+s_{n,i})} + o(1) =: U_n.
\end{align*}
Since $s_{n,i} = o(r_n)$, $i=1,2,\ldots,d$, we get  
$U_n-V_n=o(1)$ and so (\ref{weak_dep1c}) holds.
\end{proof}

\subsection{Fields of monotone levels}
In this section we shall examine  previous results in conjunction with properties of the sequence of levels $\{v_{\pmb{\psi}(n)}\}$.

\begin{prop}\label{Fstar}
	If Condition  $\mathbf{B}_1^{\pmb{\psi}}(\{v_{\pmb{\psi}(n)}\})$ holds for a monotone sequence of levels $\{v_{\pmb{\psi}(n)}\}$ that satisfies (\ref{eq:mainbis}),
	 then
	\begin{equation}\label{eq:rightend}
	v_{\pmb{\psi}(n)} \nearrow F_*,
	\end{equation}
	where $F_* = \sup \{ x \,:\, F(x) < 1 \}$. 
\end{prop}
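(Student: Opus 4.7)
The plan is to leverage the monotonicity of $\{v_{\pmb{\psi}(n)}\}$ to define the limit $v_* := \lim_{n\to\infty} v_{\pmb{\psi}(n)} \in (-\infty,+\infty]$, and then rule out both $v_* > F_*$ and $v_* < F_*$, so that $v_* = F_*$ and monotone convergence gives the claim.

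The upper bound $v_* \leq F_*$ is immediate: if $F_* < \infty$ and $v_* > F_*$, then for $n$ large enough $v_{\pmb{\psi}(n)} \geq F_*$, which forces $P(M_{\pmb{\psi}(n)} \leq v_{\pmb{\psi}(n)}) = 1$, since $X_{\bk} \leq F_*$ almost surely. This contradicts the hypothesis $P(M_{\pmb{\psi}(n)} \leq v_{\pmb{\psi}(n)}) \to \gamma < 1$ drawn from (\ref{eq:mainbis}).

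The main task is the lower bound $v_* \geq F_*$, which I would prove by contradiction: assume $v_* < F_*$, so that $F(v_*) < 1$. For any fixed integer $k \geq 2$, set $\bN(n) := \lfloor \pmb{\psi}(n)/k\rfloor$ (componentwise) and $q_1 = \cdots = q_d = k$, so that $k N_i(n) \leq \psi_i(n)$ for every $i$ and $\bN(n)\to\pmb{\infty}$. With $T_0 = 1$, the hypotheses of Lemma \ref{lem:fixed} are met under the assumed Condition $\mathbf{B}_1^{\pmb{\psi}}(\{v_{\pmb{\psi}(n)}\})$, yielding
\[
P\bigl(M_{k\lfloor \pmb{\psi}(n)/k\rfloor} \leq v_{\pmb{\psi}(n)}\bigr)
= P\bigl(M_{\lfloor \pmb{\psi}(n)/k\rfloor} \leq v_{\pmb{\psi}(n)}\bigr)^{k^d} + o(1).
\]
Since $k\lfloor \pmb{\psi}(n)/k\rfloor \leq \pmb{\psi}(n)$, the inclusion of blocks gives $P(M_{\pmb{\psi}(n)} \leq v_{\pmb{\psi}(n)}) \leq P(M_{k\lfloor \pmb{\psi}(n)/k\rfloor} \leq v_{\pmb{\psi}(n)})$, while stationarity implies $P(M_{\lfloor \pmb{\psi}(n)/k\rfloor} \leq v_{\pmb{\psi}(n)}) \leq F(v_{\pmb{\psi}(n)}) \leq F(v_*)$. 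Passing to the limit $n \to \infty$ I obtain $\gamma \leq F(v_*)^{k^d}$; letting $k \to \infty$ with $F(v_*) < 1$ forces $\gamma = 0$, contradicting $\gamma \in (0,1)$.

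I expect no significant obstacle. The one detail worth verifying is that the bare Condition $\mathbf{B}_1^{\pmb{\psi}}$ suffices, rather than $\mathbf{B}_{1+\varepsilon}^{\pmb{\psi}}$. This is why Lemma \ref{lem:fixed} is the right tool here, and not the stronger Theorem \ref{remark_BT}: the sub-boxes $k\lfloor \pmb{\psi}(n)/k\rfloor$ fit exactly inside $\pmb{\psi}(n)$, consuming no multiplicative slack in the mixing parameter $T_0$.
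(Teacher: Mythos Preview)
Your proof is correct and follows essentially the same line as the paper's: both argue by contradiction, bounding $P(M_{\pmb{\psi}(n)}\leq v_{\pmb{\psi}(n)})$ above by a power $P(M_{\lfloor\pmb{\psi}(n)/k\rfloor}\leq v_{\pmb{\psi}(n)})^{k^d}$ of a term dominated by $F(v_*)<1$. The only difference is cosmetic: the paper takes a sequence $k_n\to\infty$ with $k_n^d\beta_1^{\pmb{\psi}}(n)\to 0$ and cites relation (\ref{weak_dep}) to conclude in a single limit, whereas you freeze $k$, invoke Lemma~\ref{lem:fixed} (which indeed needs only $\mathbf{B}_1^{\pmb{\psi}}$, as you rightly note), obtain $\gamma\le F(v_*)^{k^d}$, and then let $k\to\infty$. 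Your route is arguably tidier with respect to the mixing hypothesis, since Theorem~\ref{remark_BT} nominally asks for $\mathbf{B}_{T_0(1+\varepsilon)}^{\pmb{\psi}}$; the paper in fact only uses the upper half of (\ref{weak_dep}), which comes from Corollary~\ref{cor:kaen} under $\mathbf{B}_1^{\pmb{\psi}}$ alone.
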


\begin{proof}
	If $v_{\pmb{\psi}(n_0)} \geq F_*$ for some $n_0$, then $ P(M_{\pmb{\psi}(n)}\leq v_{\pmb{\psi}(n)}) = 1$ for all $n \geq n_0$ and (\ref{eq:mainbis}) cannot hold. So assume that for some $\eta > 0$  $v_{\pmb{\psi}(n)} \leq (1 -\eta) F_*$, $n \in \GN$. Then for some $a > 0$ we have $ P( X_{\pmb{1}} \leq  v_{\pmb{\psi}(n)}) \leq 1 - a$, $n\in \GN$.
	
	Let $k_n \to \infty$ in such a way that $k_n^{d}  \beta^{\pmb{\psi}}_1(n) \to 0$. Then  by (\ref{weak_dep})
	\begin{align*}
	P(M_{\pmb{\psi}(n)}\leq v_{\pmb{\psi}(n)}) &\leq P(M_{k_n \lfloor \pmb{\psi}(n)/k_n \rfloor}\leq v_{\pmb{\psi}(n)})
	= P(M_{\lfloor \pmb{\psi}(n)/k_n \rfloor}\leq v_{\pmb{\psi}(n)})^{k_n^d} + o(1)\\ &\leq P( X_{\mathbf{1}} \leq v_{\pmb{\psi}(n)})^{k_n^d} +o(1) \leq (1-a)^{k_n^d} + o(1)\to 0.
	\end{align*}
	This again contradicts  (\ref{eq:mainbis}) and so 
	$v_{\pmb{\psi}(n)} \nearrow F_*$.
\end{proof}

\begin{prop}\label{prop:unif}
	Suppose (\ref{eq:mainbis}) holds for some monotone sequence of levels $\{v_{\pmb{\psi}(n)}\}$ and some $\gamma \in (0,1)$ and Condition $\mathbf{B}_T^{\pmb{\psi}}(\{v_{\pmb{\psi}(n)}\})$ holds for every $T>0$. 
	\begin{description}
		\item{\bf (i)} For every $d$-tuple $\bt=(t_1,t_2,\ldots,t_d) \in (0,\infty)^d$,
		\begin{equation}\label{eq:point}
		P\left(M_{(\lfloor t_1 \psi_1(n)\rfloor,\lfloor t_2 \psi_2(n)\rfloor,\ldots,\lfloor t_d \psi_d(n)\rfloor)}\leq v_{\pmb{\psi}(n)}\right) \xrightarrow[n\to\infty]{} \gamma^{\;t_1t_2\cdots t_d}.
		\end{equation}
		\item{\bf (ii)} If a set $A\subset[0,\infty)^d$ does not contain any sequence $\{\mathbf{t}(n)\}$ with the property that $t_{i_1}(n)\to\infty$ and $t_{i_2}(n)\to 0$ for some $i_1 \neq i_2\in\{1,2,\ldots,d\}$, then
		\begin{equation*}
		\sup_{\mathbf{t}\in A}\left|P\left(M_{(\lfloor t_1 \psi_1(n)\rfloor ,\lfloor t_2 \psi_2(n)\rfloor,\ldots,\lfloor t_d \psi_d(n)\rfloor )}\leq v_{\pmb{\psi}(n)}\right)- \gamma ^{\; t_1t_2\cdots t_d}\right|\xrightarrow[n\to\infty]{} 0.
		\end{equation*}
	\end{description}
\end{prop}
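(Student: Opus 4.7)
For part \textbf{(i)}, I would first handle rational $\bt$. Given $\bt = (p_1/q,\ldots,p_d/q)$ with a common denominator $q\in\GN$ (the general rational case reduces to this by taking $q = q_1\cdots q_d$), set $R_i(n) := \psi_i(n)/q$ and apply Proposition \ref{pro:pegen} with multipliers $(p_1,\ldots,p_d)$, noting $\lfloor p_i R_i(n)\rfloor = \lfloor t_i\psi_i(n)\rfloor$ and that Condition $\mathbf{B}_T^{\pmb{\psi}}$ is available for all $T$:
\[
P\big(M_{(\lfloor t_1\psi_1(n)\rfloor,\ldots,\lfloor t_d\psi_d(n)\rfloor)}\leq v_{\pmb{\psi}(n)}\big) = P\big(M_{(\lfloor \psi_1(n)/q\rfloor,\ldots,\lfloor \psi_d(n)/q\rfloor)}\leq v_{\pmb{\psi}(n)}\big)^{p_1\cdots p_d} + o(1).
\]
Applying the same proposition with the same $R_i(n)$ and multipliers $(q,\ldots,q)$ gives $P(M_{\pmb{\psi}(n)}\leq v_{\pmb{\psi}(n)}) = P(M_{(\lfloor \psi_1(n)/q\rfloor,\ldots)}\leq v_{\pmb{\psi}(n)})^{q^d} + o(1)$. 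By (\ref{eq:mainbis}) the LHS tends to $\gamma$, so the inner probability tends to $\gamma^{1/q^d}$, and combining yields the target limit $\gamma^{(p_1\cdots p_d)/q^d} = \gamma^{t_1\cdots t_d}$. For general $\bt\in(0,\infty)^d$ I would sandwich by rationals $\bt^-\leq\bt\leq\bt^+$: coordinatewise monotonicity of $P(M_{\bN}\leq v)$ together with $\lfloor \bt^-\pmb{\psi}(n)\rfloor \leq \lfloor \bt\pmb{\psi}(n)\rfloor \leq \lfloor \bt^+\pmb{\psi}(n)\rfloor$ traps the probability between expressions converging to $\gamma^{\bt^{+*}}$ and $\gamma^{\bt^{-*}}$, which are arbitrarily close to $\gamma^{\bt^*}$ by continuity of $\bt\mapsto\gamma^{t_1\cdots t_d}$.

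For part \textbf{(ii)}, my first step is to convert the sequential hypothesis on $A$ into a set-theoretic statement: there exist $\varepsilon_0, R_0>0$ such that every $\bt\in A$ with $\max_i t_i > R_0$ satisfies $\min_j t_j \geq \varepsilon_0$. (Otherwise, choosing $R=n$ and $\varepsilon=1/n$ for each $n$ and extracting a subsequence where the offending indices $(i_1,i_2)$ stabilize produces exactly a forbidden sequence.) Consequently $A\subseteq[0,R_0]^d\cup[\varepsilon_0,\infty)^d$. Given $\eta>0$, I then fix $L\geq R_0$ large enough that $\gamma^{L\varepsilon_0^{d-1}} < \eta/4$.

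On the compact piece $A\cap[0,L]^d$ I would use uniform continuity of $\bt\mapsto\gamma^{t_1\cdots t_d}$ to cover by finitely many small closed boxes $[\bt_k^-,\bt_k^+]$ with rational corners (so $\gamma^{\bt_k^{-*}}-\gamma^{\bt_k^{+*}}<\eta/3$). For any $\bt$ in such a box, coordinatewise monotonicity gives
\[
P\big(M_{(\lfloor \bt_k^+\pmb{\psi}(n)\rfloor)}\leq v_{\pmb{\psi}(n)}\big) \leq P\big(M_{(\lfloor \bt\pmb{\psi}(n)\rfloor)}\leq v_{\pmb{\psi}(n)}\big) \leq P\big(M_{(\lfloor \bt_k^-\pmb{\psi}(n)\rfloor)}\leq v_{\pmb{\psi}(n)}\big),
\]
and applying (i) at the finitely many corners yields uniform convergence within $\eta$. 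On the tail $A\cap[\varepsilon_0,\infty)^d\cap\{\max_i t_i>L\}$, if $t_{i_0}>L$ then for $\bs$ defined by $s_{i_0}=L$, $s_j=\varepsilon_0$ ($j\neq i_0$) monotonicity gives $P(M_{(\lfloor\bt\pmb{\psi}(n)\rfloor)}\leq v_{\pmb{\psi}(n)}) \leq P(M_{(\lfloor\bs\pmb{\psi}(n)\rfloor)}\leq v_{\pmb{\psi}(n)})$. By (i) the RHS is eventually within $\eta/4$ of $\gamma^{L\varepsilon_0^{d-1}}<\eta/4$, and the same bound controls $\gamma^{\bt^*}$; taking the maximum of the finitely many $\bs$ (one per coordinate $i_0$) makes the bound uniform over the tail.

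The main technical obstacle will be the uniform treatment of the unbounded tail in (ii): pointwise convergence from (i) is not enough there, and the trick is to observe that the hypothesis on $A$ forces every tail $\bt$ to dominate coordinatewise one of finitely many rational anchors $\bs$, so a single application of (i) at those anchors uniformly squeezes both $P(M\leq v_{\pmb{\psi}(n)})$ and $\gamma^{\bt^*}$ below $\eta$. Part (i) itself should be routine modulo the double use of Proposition \ref{pro:pegen}.
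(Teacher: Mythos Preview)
Your proposal is correct. Part \textbf{(i)} is essentially the paper's argument: the paper first applies Proposition \ref{pro:pegen} to get the case $t_i=1/q_i$ and then again for $t_i=p_i/q_i$, which is exactly your two-step use of the proposition with a common denominator, followed by the same monotonicity-and-continuity sandwich to pass from $\GQ_+^d$ to $(0,\infty)^d$.

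For part \textbf{(ii)} the ingredients are the same but the packaging differs. The paper argues by sequential compactness: take any sequence $\bt(n)\in A$, pass to a subsequence converging in the compactification $[0,\infty]^d$, and handle the three admissible limit types (interior point, a coordinate $\to 0$ with the others bounded, a coordinate $\to\infty$ with the others bounded below) directly via monotonicity and part \textbf{(i)}; the excluded fourth type is precisely what the hypothesis on $A$ rules out. You instead extract from the hypothesis the decomposition $A\subseteq[0,R_0]^d\cup[\varepsilon_0,\infty)^d$, treat the compact piece by a finite box cover and the unbounded piece by comparison with finitely many fixed anchors $\bs$. Both routes reduce the uniform statement to finitely many applications of \textbf{(i)}; the paper's is slightly quicker to write (no explicit $\varepsilon_0,R_0,L$), while yours is more constructive and makes the role of the hypothesis on $A$ very transparent through the inclusion $A\subseteq[0,R_0]^d\cup[\varepsilon_0,\infty)^d$.
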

\begin{proof}
	First consider $t_1 = 1/q_1, t_2 = 1/q_2, \ldots, t_d = 1/q_d$, where $q_1, q_2, \ldots, q_d \in \GN$. Set $R_i(n) =
	\pmb{\psi}_i(n)/q_i$. By Proposition \ref{pro:pegen},
	\[ \gamma \longleftarrow P(M_{\pmb{\psi}(n)}\leq v_{\pmb{\psi}(n)}) = 
	P(M_{(\lfloor \psi_1(n)/q_1 \rfloor,  \lfloor \psi_2(n)/q_2 \rfloor, \ldots, \lfloor \psi_d(n)/q_d \rfloor)}\leq v_{\pmb{\psi}(n)})^{q_1 q_2\cdots q_d} + o(1),\]
	hence  
	\[ P(M_{(\lfloor \psi_1(n)/q_1 \rfloor,  \lfloor \psi_2(n)/q_2 \rfloor, \ldots, \lfloor \psi_d(n)/q_d \rfloor}\leq v_{\pmb{\psi}(n)}) \longrightarrow \gamma^{1/(q_1 q_2 \cdots q_d)} = \gamma^{\;t_1t_2\cdots t_d}.\]
	By another application of Proposition \ref{pro:pegen} we have for $p_1, p_2, \ldots, p_d \in \GN$,
	\begin{align*}
	&P(M_{(\lfloor p_1 \psi_1(n)/q_1 \rfloor,  \lfloor p_2 \psi_2(n)/q_2 \rfloor, \ldots, \lfloor p_d \psi_d(n)/q_d \rfloor)}\leq v_{\pmb{\psi}(n)})\\
	&\qquad\qquad\qquad= P(M_{(\lfloor \psi_1(n)/q_1 \rfloor,  \lfloor \psi_2(n)/q_2 \rfloor\ldots, \lfloor \psi_d(n)/q_d \rfloor)}\leq v_{\pmb{\psi}(n)})^{p_1 p_2 \cdots p_d} + o(1) \\
	&\qquad\qquad\qquad= \gamma^{\frac{p_1 p_2 \cdots p_d}{q_1 q_2 \cdots q_d}} + o(1) = \gamma^{t_1 t_2 \cdots t_d}+ o(1),
	\end{align*} 
	if $t_1 = p_1/q_1, t_2 = p_2/q_2,\ldots, t_d= p_d/q_d$.
	
	We have proved (\ref{eq:point}) over the countable dense set $\GQ_+^d$. The pointwise convergence over $\GR_+^d$ follows then by the monotonicity of maps 
	\[\mathbf{s}\mapsto P\left(M_{(\lfloor s_1 \psi_1(n)\rfloor,\lfloor s_2 \psi_2(n)\rfloor,\ldots,\lfloor s_d \psi_d(n)\rfloor)}\leq v_{\pmb{\psi}(n)}\right)\] and the continuity of the limiting map $\mathbf{s}\mapsto \gamma^{\;s_1s_2\cdots s_d}$.
	
	Part (ii) of Proposition \ref{prop:unif} is, in fact,  a general statement on convergence of monotone functions to a continuous function on $[0,\infty)^d$.  
	For the sake of notational simplicity we shall restrict our attention to the case $d=2$. The general case can be proved analogously.
	
	Let $A\subset[0,\infty)^2$ be a set fulfilling the assumptions of part (ii) of Proposition \ref{prop:unif}.  Let $\{\mathbf{t}(n)\} \subset A$ be a sequence converging to some $\bt\in[0,\infty]^2$. We have to prove that 
	\begin{equation}\label{NAT2}
	P\left(M_{(\lfloor t_1(n)\psi_1(n)\rfloor ,\lfloor t_2(n)\psi_2(n)\rfloor )}\leq v_{\pmb{\psi}(n)}\right) - \gamma ^{\;t_1(n)t_2(n)} \xrightarrow[n\to\infty]{} 0.
	\end{equation}
	We shall consider the following three situations: (a) $\mathbf{t}\in(0,\infty)^2$; (b)~$\max\{t_1,t_2\}<\infty$ and $\min\{t_1,t_2\}=0$; (c) $\max\{t_1,t_2\}=\infty$ and $\min\{t_1,t_2\}>0$. The case (d) $\max\{t_1,t_2\}=\infty$ and $\min\{t_1,t_2\}=0$ is excluded by the assumptions on the set $A$.
	
	Suppose that $\mathbf{t}\in(0,\infty)^2$. Then $(t_1-\varepsilon,t_2-\varepsilon)\leq (t_1(n),t_2(n)) \leq (t_1+\varepsilon,t_2+\varepsilon)$ for sufficiently large $n\in\GN $ and every $\varepsilon > 0$. By the monotonicity and part (i) we get for small $\varepsilon$
	\begin{align*}
	\gamma^{\;(t_1+\varepsilon)(t_2+\varepsilon)} \longleftarrow&P\left(M_{(\lfloor (t_1+\varepsilon)\psi_1(n)\rfloor ,\lfloor (t_2+\varepsilon)\psi_2(n)\rfloor )}\leq v_{\pmb{\psi}(n)}\right)\\ 
	&\qquad\leq P\left(M_{(\lfloor t_1(n)\psi_1(n)\rfloor ,\lfloor t_2(n)\psi_2(n)\rfloor )}\leq v_{\pmb{\psi}(n)}\right) \\
	&\qquad\qquad\leq P\left(M_{(\lfloor (t_1-\varepsilon)\psi_1(n)\rfloor ,\lfloor (t_2-\varepsilon)\psi_2(n)\rfloor )}\leq v_{\pmb{\psi}(n)}\right) \longrightarrow \gamma^{\;(t_1-\varepsilon)(t_2-\varepsilon)}.
	\end{align*}
	Hence 
	\[ \lim_{n\to\infty} P\left(M_{(\lfloor t_1(n)\psi_1(n)\rfloor ,\lfloor t_2(n)\psi_2(n)\rfloor )}\leq v_{\pmb{\psi}(n)}\right) =  \gamma^{\; t_1t_2} = \lim_{n\to\infty} \gamma^{\; t_1(n)t_2(n)},\]
	and condition (\ref{NAT2})  is satisfied in case (a).
	
	Now consider $\mathbf{t}=(t_1,0)$ with $t_1\in[0,\infty)$. Then $\gamma^{\;t_1(n)t_2(n)}\to 1$. Similarly, for every $\varepsilon>0$ we have by part (i)
	\begin{align*}
	 1 \geq &P\left(M_{(\lfloor t_1(n) \psi_1(n)\rfloor,\lfloor t_2(n) \psi_2(n)\rfloor)}
	  \leq v_{\pmb{\psi}(n)}\right)\\
	  &\qquad\qquad\geq  P\left(M_{(\lfloor(t_1+\varepsilon)\psi_1(n)\rfloor,\lfloor\varepsilon \psi_2(n)\rfloor)}\leq v_{\pmb{\psi}(n)}\right) \to \gamma^{(t_1+\varepsilon)\varepsilon}.
	 \end{align*}	Passing with $\varepsilon \to 0$ gives us (\ref{NAT2}) in case (b).
	
	Next assume that $\mathbf{t}=(\infty,t_2)$ for some $t_2\in(0,\infty]$. Then $\gamma^{\;t_1(n)t_2(n)}\to 0$. Moreover, for all $R>0$, $\varepsilon>0$ and sufficiently large $n\in\GN$ we have
	\begin{align*}
	0\leq &P\left(M_{(\lfloor t_1(n)\psi_1(n)\rfloor ,\lfloor t_2(n)\psi_2(n)\rfloor )} \leq v_{\pmb{\psi}(n)} \right) \\
	&\qquad\qquad\leq P\left(M_{(\lfloor R\psi_1(n)\rfloor ,\lfloor (t_2-\varepsilon)\psi_2(n)\rfloor )} \leq v_{\pmb{\psi}(n)} \right) \longrightarrow \gamma^{(t_2-\varepsilon)R}.
	\end{align*}
	Passing with $R \to \infty$ gives (\ref{NAT2}) in case (c) and completes the proof of part (ii) of the proposition.
\end{proof} 

\subsection{Proof of Theorem \ref{theo1}}
\subsubsection{Necessity}\label{Sec221}

Suppose that $G$ is a continuous distribution function. Take $\gamma \in (0,1)$ and for $\bn \in \GN^d$ define
\[ v_{\bn} = \inf \{ x\,:\, G(x)^{\bn^*} = \gamma\}.\]
Then the field of levels $\{v_{\bn}\}$ is strongly monotone.

If $G$ is a phantom distribution function for $\{X_{\bn}\}$, then 
\[ P\left( M_{\bn} \leq v_{\bn}\right) = G(v_{\bn})^{\bn^*} + o(1) = \gamma + o(1),\]
hence condition (i) of the theorem is satisfied.

Next let $\pmb{\psi}$ be a monotone curve and let $T > 0$. 
We want to verify Condition $\mathbf{B}_T^{\pmb{\psi}}(\{v_{\pmb{\psi}(n)}\})$.
Assume that $\mathbf{p}(n)\to\mathbf{\infty}$ and $\mathbf{q}(n) \to \mathbf{\infty}$ satisfy additionally 
\[  \mathbf{p}(n)+ \mathbf{q}(n) \leq T \pmb{\psi}(n), \ n\in \GN.\]
Passing to a subsequence, if necessary, we can assume that 
\[ \frac{p_i(n)}{\psi_i(n)} \to s_i \in [0,T],\ \ \frac{q_i(n)}{\psi_i(n)} \to t_i \in [0,T],\ \ i=1,2,\ldots, d.\]
We have 
\[ P\big( M_{\bp(n) + \bq(n)} \leq v_{\pmb{\psi}(n)}\big) = 
G(v_{\pmb{\psi}(n)})^{(\bp(n) + \bq(n))^*} = G(v_{\pmb{\psi}(n)})^{\pmb{\psi}(n)^* \frac{(\bp(n) + \bq(n))^*}{\pmb{\psi}(n)^*}} \longrightarrow \gamma^{\;\prod_{i=1}^d (s_i + t_i)}.\]
Consider the following expansion.
\[ \prod_{i=1}^d (s_i + t_i) = \sum_{I_0 \subset \{1,2,\ldots ,d\}} \prod_{i\in I_0} s_i \times \prod_{j\not\in I_0} t_j = \sum_{I_0 \subset \{1,2,\ldots ,d\}} \Pi_{I_0}.\]
It is clear that each term $\gamma^{\Pi_{I_0}}$ is a common limit for both $G(v_{\pmb{\psi}(n)})^{\br(n)^*}$ and $P(M_{\br(n)} \leq v_{\pmb{\psi}(n)})$, where
\[ r_i(n) = \begin{cases} p_i(n), &\text{ if  }\ i \in I_0;\\
q_i(n), &\text{ if  }\ i \not\in I_0.\end{cases} \]
We have proved that the difference between the two expressions appearing in Condition $\mathbf{B}_T^{\pmb{\psi}}(\{v_{\pmb{\psi}(n)}\})$ tends to zero.

The same is also true  if some coordinate of $\bp(n)$ or  $\bq(n)$ remains bounded along a subsequence, since then the corresponding terms in the expansion converge to 1. Indeed, suppose that e.g. $p_1(n) \leq K$, $n\in\GN$. Then for large $n$
\begin{align*}
\lim_{n\to\infty}P(M_{(p_1(n), \ldots, p_d(n))}\leq v_{\pmb{\psi}(n)}) &\geq \lim_{n\to\infty}P(M_{(\lfloor\varepsilon\psi_1(n)\rfloor,\lfloor T\psi_2(n)\rfloor,\ldots,  \lfloor T\psi_d(n)\rfloor)}\leq v_{\pmb{\psi}(n)}) \\
&=\lim_{n\to\infty} G(v_{\pmb{\psi}(n)})^{\varepsilon T^{d-1} {\pmb{\psi}(n)}^*}=\gamma^{\; \varepsilon T^{d-1}} \nearrow 1,\quad \text{ as } \varepsilon \to 0.
\end{align*}

\subsubsection{Sufficiency} 

Let $\{v_{\bn}\}$ be a strongly monotone field of levels such that 
$ P\left( M_{\bn} \leq v_{\bn}\right) \longrightarrow \gamma$,
for some $\gamma \in (0,1)$. 

We shall show that along every monotone curve $\pmb{\psi}(n)$ there exists a continuous phantom distribution function $G_{\pmb{\psi}}$ and that all these functions are strictly  tail-equivalent in the sense of \cite{DJL15}, i.e. 
if  $G_{\pmb{\psi}'}$ and  $G_{\pmb{\psi}^{\prime\prime}}$ are phantom distribution functions along monotone curves $\pmb{\psi}'$ and $\pmb{\psi}^{\prime\prime}$, respectively, then 
\begin{equation}\label{eqn:tailequiv}
(G_{\pmb{\psi}'})_* = (G_{\pmb{\psi}^{\prime\prime}})_* \quad\text{and}\quad  \frac{1 - G_{\pmb{\psi}'}(x)}{1 - G_{\pmb{\psi}^{\prime\prime}}(x)} \to 1, \text{ as  $x \to (G_{\pmb{\psi}'})_*- $.} 
\end{equation}
Applying \cite[Proposition 1, p. 700]{DJL15} one gets that 
\begin{equation}\label{eq:equiv}
\sup_{x\in\GR} \left| G_{\pmb{\psi}'}(x)^n - G_{\pmb{\psi}^{\prime\prime}}(x)^n \right| \longrightarrow 0.
\end{equation}
If (\ref{eq:equiv}) holds for all pairs $\pmb{\psi}'$ and $\pmb{\psi}^{\prime\prime}$, then it is enough to set $G = G_{\pmb{\Delta}}$, where $\pmb{\Delta}(n) = (n,n,\ldots, n)$.

So let us take any monotone curve $\pmb{\psi}(n)$ and assume that Condition $\mathbf{B}_T^{\pmb{\psi}}(\{v_{\pmb{\psi}(n)}\})$ holds for every $T > 0$.

We define $G_{\pmb{\psi}}$ by the following formula. 

\begin{equation}\label{G_form}
G_{\pmb{\psi}}(x) := \left\{ \begin{array}{ll}
0, & \text{if} \quad x<v_{\pmb{\psi}(1)};\vspace{0.2cm}\\
\gamma^{1/\pmb{\psi}(n)^*}, & \text{if}\quad x\in [v_{\pmb{\psi}(n)}, v_{\pmb{\psi}(n+1)});\vspace{0.2cm}\\
1, & \text{if} \quad x\geq v_\infty:=\sup\{v_{\pmb{\psi}(n)}\,:\,n\in\GN\}.
\end{array} \right.
\end{equation}
Notice that by Lemma \ref{Fstar} $v_{\pmb{\psi}(n)} \nearrow F_* = (G_{\pmb{\psi}})_*$. 

We want to prove that for every sequence $\{x_n\} \subset \GR$
\[ P\left( M_{\pmb{\psi}(n)} \leq x_n\right) -  G_{\pmb{\psi}}(x_n)^{\pmb{\psi}(n)^*} \longrightarrow 0.\]
It is easy to see that the only nontrivial case is when $x_n \nearrow (G_{\pmb{\psi}})_*$.  For each $n\in\GN$, let $m_n$ be such that $v_{\pmb{\psi}(m_n)} \leq x_n <  v_{\pmb{\psi}(m_n + 1)}$ and let \[ t_1(n) = \frac{\psi_1(n)}{\psi_1(m_n)},\ t_2(n) = \frac{\psi_2(n)}{\psi_2(m_n)},\ \ldots,\ t_d(n) = \frac{\psi_d(n)}{\psi_d(m_n)}.\]
By the monotonicity of $\pmb{\psi}(n)$, for  given $n$ either 
$t_1(n), t_2(n),\ldots, t_d(n) \leq 1$, or $t_i(n) \geq 1$, $i=1,2,\ldots,d$, so that the set $A = \{ \mathbf{t}(n) = (t_1(n), t_2(n),\ldots, t_d(n))\,;\, n\in\GN\}$ satisfies the assumption of part (ii) in Proposition \ref{prop:unif}. Consequently
\begin{align*}
P\left( M_{\pmb{\psi}(n)} \leq x_n\right) &\geq P\left( M_{\pmb{\psi}(n)} \leq v_{\pmb{\psi}(m_n)}\right) \\
&= P(M_{(t_1(n) \psi_1(m_n),\,  t_2(n) \psi_2(m_n), \ldots,\, t_d(n) \psi_d(m_n))}\leq v_{\pmb{\psi}(m_n)})\\
&= \gamma^{t_1(n)\cdot t_2(n)\cdots t_d(n)} + o(1).
\end{align*}
Similarly
\begin{align*}
P\left( M_{\pmb{\psi}(n)} \leq x_n\right) &\leq P\left( M_{\pmb{\psi}(n)} \leq v_{\pmb{\psi}(m_n+1)}\right) \\
&= \gamma^{t_1(n)\cdot t_2(n)\cdots t_d(n)\frac{\pmb{\psi}(m_n)^*}{\pmb{\psi}(m_n+1)^*}} + o(1) = \gamma^{t_1(n)\cdot t_2(n)\cdots t_d(n)} + o(1).
\end{align*}
Therefore 
\[ P\left( M_{\pmb{\psi}(n)} \leq x_n\right) = \gamma^{\frac{\pmb{\psi}(n)^*}{\pmb{\psi}(m_n)^*}} + o(1) = 
G_{\pmb{\psi}}(x_n)^{\pmb{\psi}(n)^*} + o(1),\]
and our claim follows. It remains to replace the purely discontinuous distribution function $G_{\pmb{\psi}}$ with another that is continuous and strictly tail-equivalent to $G_{\pmb{\psi}}$. This can be done following e.g \cite[pp. 703-704]{DJL15}.  

\begin{rem}\label{rem:help}
	Note that so far we have used only the monotonicity of levels $\{v_{\pmb{\psi}}\}$!
\end{rem}

In order to prove the strict tail-equivalence of all $G_{\pmb{\psi}}$ we need a slight improvement of \cite[Proposition 1]{DJL15}.

\begin{lem}\label{lem:key1}
	Let $\{\phi(n)\}\subset \GN$ be increasing and such that 
	$\phi(n)/\phi(n+1) \to 1$. If two distribution functions $G$ and  $H$ satisfy
	\[ \lim_{n\to\infty} G(v_n)^{\phi(n)} =  \lim_{n\to\infty} H(v_n)^{\phi(n)} = \gamma \in (0,1),\]
	for some non-decreasing sequence of levels  $\{v_n\}$, then $G$ and $H$ are strictly tail-equivalent.
\end{lem}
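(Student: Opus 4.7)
\medskip

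\noindent\textbf{Plan for the proof of Lemma \ref{lem:key1}.}

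The plan is to run the classical tail-equivalence argument from \cite[Proposition 1]{DJL15}, while tracking where the hypothesis on $\phi$ enters. First I would observe that since $\phi(n)$ is strictly increasing in $\GN$ we have $\phi(n)\to\infty$, so the convergence $G(v_n)^{\phi(n)}\to\gamma\in(0,1)$ forces $G(v_n)\to 1$; the same is true for $H$. Because $\{v_n\}$ is non-decreasing, it has a limit $L\in(-\infty,+\infty]$, and $G(v_n)\to 1$ together with $v_n\leq L$ gives $G(L-)=1$, i.e.\ $L\geq G_*$. On the other hand, if some $v_{n_0}>G_*$, then $G(v_n)^{\phi(n)}=1$ eventually, contradicting $\gamma<1$; hence $L\leq G_*$. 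Therefore $L=G_*$, and the analogous reasoning for $H$ gives $L=H_*$, proving $(G)_*=(H)_*=:F_*$ (the first half of strict tail equivalence).

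Next, taking logarithms in $G(v_n)^{\phi(n)}\to\gamma$ and using $-\ln(1-t)\sim t$ as $t\to 0$, I would deduce
\[
\phi(n)\bigl(1-G(v_n)\bigr)\longrightarrow -\ln\gamma,\qquad \phi(n)\bigl(1-H(v_n)\bigr)\longrightarrow -\ln\gamma.
\]
In particular $(1-G(v_n))/(1-H(v_n))\to 1$, so strict tail equivalence holds along the subsequence of levels $\{v_n\}$.

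To pass from the discrete sequence $\{v_n\}$ to the full left limit $x\to F_*-$, I would use monotonicity in the standard sandwiching fashion. Given $x<F_*$ with $x$ close to $F_*$, pick $n=n(x)$ with $v_n\leq x<v_{n+1}$. Then
\[
\frac{1-G(v_{n+1})}{1-H(v_n)}\;\leq\;\frac{1-G(x)}{1-H(x)}\;\leq\;\frac{1-G(v_n)}{1-H(v_{n+1})},
\]
and by the asymptotics above the two outer ratios are asymptotically equivalent to $\phi(n)/\phi(n+1)$ and $\phi(n+1)/\phi(n)$, respectively. This is exactly where the hypothesis $\phi(n)/\phi(n+1)\to 1$ is needed: both bounds tend to $1$, so $(1-G(x))/(1-H(x))\to 1$ as $x\to F_*-$, completing the verification of (\ref{eqn:tailequiv}).

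The only subtle point -- and the main obstacle compared to the original \cite[Proposition 1]{DJL15} -- is precisely the step going from convergence along $\{v_n\}$ to convergence along all $x\to F_*-$: in the original setting $\phi(n)=n$, so $\phi(n)/\phi(n+1)\to 1$ is automatic, whereas here it must be postulated as an extra assumption and carefully invoked in the sandwich bound. Apart from that, the argument is essentially the same as in \cite{DJL15}; I would also be mildly careful with the case $\gamma^{-\ln\gamma}$ when writing the asymptotic equivalences, but this reduces to the standard $-\ln(1-t)\sim t$ expansion since $G(v_n),H(v_n)\to 1$.
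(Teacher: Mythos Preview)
Your proposal is correct and follows essentially the same approach as the paper's proof: both deduce $\phi(n)\bigl(1-G(v_n)\bigr)\to -\ln\gamma$ (and likewise for $H$), then use a monotonicity sandwich with $v_{m_n}\leq x<v_{m_n+1}$ together with $\phi(n)/\phi(n+1)\to 1$ to pass to arbitrary $x\to F_*-$. The only cosmetic difference is that the paper sandwiches $\phi(m_n)\bigl(1-G(x_n)\bigr)$ and $\phi(m_n)\bigl(1-H(x_n)\bigr)$ separately and then divides, whereas you sandwich the ratio $(1-G(x))/(1-H(x))$ directly; you are also more explicit than the paper in verifying $G_*=H_*$.
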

\begin{proof} We mimic \cite[p.701]{DJL15}. Let $x_n \nearrow G_* = H_*$ and let $m_n$ be such that $v_{m_n} \leq x_n < v_{m_n +1}$, $n\in\GN$. Then 
	\[ \phi(m_n) \big( 1 - G(v_{m_n+1})\big) \leq \phi(m_n) \big( 1 - G(x_n)\big) \leq  \phi(m_n) \big( 1 - G(v_{m_n})\big).\]
	Then both $\phi(m_n) \big( 1 - G(v_{m_n})\big) \longrightarrow -\log \gamma$ and 
	\[\phi(m_n) \big( 1 - G(v_{m_n+1})\big) = \frac{\phi(m_n)}{\phi(m_n+1)}\phi(m_n+1) \big( 1 - G(v_{m_n+1})\big)\longrightarrow -\log \gamma,\]
	and so $\phi(m_n) \big( 1 - G(x_n)\big) \longrightarrow -\log \gamma$. But we can repeat this procedure for $H$ equally well.
	Therefore 
	\[ \lim_{n\to\infty} \frac{ 1 - G(x_n)}{ 1 - H(x_n)} = \lim_{n\to\infty}  \frac{\phi(m_n)\big( 1 - G(x_n)\big)}{\phi(m_n)\big( 1 - H(x_n)\big)} = 1.\]
\end{proof}

Let  $G_{\pmb{\psi}'}$ and  $G_{\pmb{\psi}^{\prime\prime}}$ be phantom distribution functions defined by (\ref{G_form}) for monotone curves $\pmb{\psi}'$ and $\pmb{\psi}^{\prime\prime}$. 

By the very definition $G_{\pmb{\psi}'}(v_{\pmb{\psi}'(n)})^{\pmb{\psi}'(n)^*} \longrightarrow \gamma$. So it is enough to show that also 
\[ G_{\pmb{\psi}^{\prime\prime}}(v_{\pmb{\psi}'(n)})^{\pmb{\psi}'(n)^*} \longrightarrow \gamma.\]
Let $m_n$ be such that $\pmb{\psi}^{\prime\prime}(m_n)^* \leq 
\pmb{\psi}'(n)^* < \pmb{\psi}^{\prime\prime}(m_n+1)^*$. Clearly, we have 
\begin{equation}\label{eq:fina} \lim_{n\to\infty} \frac{\pmb{\psi}^{\prime\prime}(m_n)^*}{\pmb{\psi}'(n)^*} = \lim_{n\to\infty} \frac{\pmb{\psi}^{\prime\prime}(m_n+1)^*}{\pmb{\psi}'(n)^*} = 1.
\end{equation}

Since $v_{\bn}$ is {\em strongly} monotone, we have also
$ v_{\pmb{\psi}^{\prime\prime}(m_n)} \leq v_{\pmb{\psi}'(n)}
\leq v_{\pmb{\psi}^{\prime\prime}(m_n+1)}$, hence 
\[ G_{\pmb{\psi}^{\prime\prime}}(v_{\pmb{\psi}^{\prime\prime}(m_n)})^{\pmb{\psi}'(n)^*} \leq G_{\pmb{\psi}^{\prime\prime}}(v_{\pmb{\psi}'(n)})^{\pmb{\psi}'(n)^*}\leq G_{\pmb{\psi}^{\prime\prime}}(v_{\pmb{\psi}^{\prime\prime}(m_n+1)})^{\pmb{\psi}'(n)^*}.\]
By (\ref{eq:fina}) the first and the third terms converge to $\gamma$, and so $G_{\pmb{\psi}'}$ and  $G_{\pmb{\psi}^{\prime\prime}}$ are strictly tail-equivalent. This completes the proof of Theorem \ref{theo1}.

\subsection{Proof of Theorem \ref{th:direct}}

Implication (ii) $\Rightarrow$ (i) is a matter of definitions.
Implication (i) $\Rightarrow$ (iii) can be proved the same way as the necessity in Section \ref{Sec221} (with obvious modifications).

We may also profit from the proof of Theorem \ref{theo1} in the proof of implication (iii) $\Rightarrow$ (ii).
Let $\pmb{\psi}$ be a monotone curve satisfying assumption (iii)  
of Theorem \ref{th:direct}. By Remark \ref{rem:help} function  $G_{\pmb{\psi}}$ defined 
by (\ref{G_form}) is a phantom distribution function for
$\{X_{\bn}\}$  {\em along}  $\pmb{\psi}$. We want to show 
that it is also a phantom distribution function 
for $\{X_{\bn}\}$  along  {\em any other} 
$\pmb{\varphi} \in \mathcal{U}_{\pmb{\psi}}$, i.e. that for 
any $x_n \nearrow (G_{\pmb{\psi}})_* = F_*$ we have
\[ P\left( M_{\pmb{\varphi}(n)} \leq x_n\right) -  G_{\pmb{\psi}}(x_n)^{\pmb{\varphi}(n)^*} \longrightarrow 0.\]  

For each $n\in\GN$, let $m_n$ be such that $v_{\pmb{\psi}(m_n)} \leq x_n <  v_{\pmb{\psi}(m_n + 1)}$ and let \[ t_1(n) = \frac{\varphi_1(n)}{\psi_1(m_n)},\ t_2(n) = \frac{\varphi_2(n)}{\psi_2(m_n)},\ \ldots,\ t_d(n) = \frac{\varphi_d(n)}{\psi_d(m_n)}.\]
We are going to show that the set $A = \{ \mathbf{t}(n) = (t_1(n), t_2(n),\ldots, t_d(n))\,;\, n\in\GN\}$ satisfies the assumption of part (ii) in Proposition \ref{prop:unif}. By the definition of the class $\mathcal{U}_{\pmb{\psi}}$, let $C \geq 1$ be such that
for almost all $n\in\GN$
\begin{equation*}
\pmb{\varphi}(n)\in \bigcup_{j\in\GN} \prod_{i=1}^d [C^{-1} \psi_i(j), C \psi_i(j)].
\end{equation*}
This means that for $n \geq n_0$ there is $j_n \to \infty$ such that
\[ C^{-1} \psi_i(j_n) \leq  \varphi_i(n) \leq C \psi_i(j_n),\quad i=1,2,\ldots, d. \]
Depending on whether $j_n \leq m_n$ or $j_n \geq m_n$ we get that either $t_1(n), t_2(n),\ldots, t_d(n) \leq C$ or  $t_1(n), t_2(n),\ldots, t_d(n) \geq C^{-1}$.
Hence we may apply Proposition \ref{prop:unif} (ii) and we can estimate
\begin{align*}
P\left( M_{\pmb{\varphi}(n)} \leq x_n\right) &\geq P\left( M_{\pmb{\varphi}(n)} \leq v_{\pmb{\psi}(m_n)}\right) \\
&= P(M_{(t_1(n) \psi_1(m_n),\,  t_2(n) \psi_2(m_n), \ldots,\, t_d(n) \psi_d(m_n))}\leq v_{\pmb{\psi}(m_n)})\\
&= \gamma^{t_1(n)\cdot t_2(n)\cdots t_d(n)} + o(1),
\end{align*}
and
\begin{align*}
P\left( M_{\pmb{\varphi}(n)} \leq x_n\right) &\leq P\left( M_{\pmb{\varphi}(n)} \leq v_{\pmb{\psi}(m_n+1)}\right) \\
&= \gamma^{t_1(n)\cdot t_2(n)\cdots t_d(n)\frac{\pmb{\psi}(m_n)^*}{\pmb{\psi}(m_n+1)^*}} + o(1) = \gamma^{t_1(n)\cdot t_2(n)\cdots t_d(n)} + o(1).
\end{align*}
Therefore 
\[ P\left( M_{\pmb{\varphi}(n)} \leq x_n\right) = \gamma^{\frac{\pmb{\varphi}(n)^*}{\pmb{\psi}(m_n)^*}} + o(1) = 
G_{\pmb{\psi}}(x_n)^{\pmb{\varphi}(n)^*} + o(1),\]
and Theorem \ref{th:direct} follows. 

\subsection{Proof of Theorems \ref{theo11} and \ref{th:direct11}}

In view of Theorems \ref{theo1} and \ref{th:direct} and Definition \ref{def:index}, it is enough to show that $F^{\theta}$ is tail equivalent to a phantom distribution function $G$ (resp. a sectorial phantom distribution function $G_{\pmb{\Delta}}$) for $\{X_{\bn}\}$. 

But by (\ref{eq:mainbis11}) we have
\[G(v_{\bn})^{{\bn}^*} \to \gamma_{or},\quad \big(F^{\theta}(v_{\bn})\big)^{{\bn}^*} = \big(F(v_{\bn})^{{\bn}^*}\big)^{\theta} \to 
\gamma_{in}^{\theta} = \gamma_{or},\]
and we can apply Lemma \ref{lem:key1}. The reasoning leading to Theorem \ref{th:direct11} differs only by notation.
\section*{Acknowledgement}
Section 1.4 was performed by I.V. Rodionov with financial support of the Russian Science Foundation under grant No. 19-11-00290.

\end{document}